\documentclass[12pt]{amsart}
\usepackage{latexsym} % for \Box
\usepackage{amsmath,amssymb,amsfonts,amsthm}
\usepackage{verbatim}
\usepackage[dvips]{graphicx}
\newtheorem{theorem}{Theorem}

\newtheorem{remark}{Remark}
\newtheorem{proposition}{Proposition}
\newtheorem{corollary}{Corollary}
\newtheorem{lemma}{Lemma}
\newtheorem{example}{Example}

\theoremstyle{definition}
\newtheorem{definition}{Definition}

 % suggest line break

\begin{document}

\noindent

  \title{ On Stochastic generalized functions}

\thanks{Research partially supported by FAPESP 02/10246-2, 2007/54740-4 and CNPQ 302704/2008-6.}

\keywords{White noise, Wick product, Generalized functions,
Colombeau Algebras, parabolic equation. \\ \indent 2000 {\it
Mathematics Subject Classification.  Primary: 46F30, 60G20 ;
Secondary: 60B99.}}
\begin{center}
\end{center}

\author{Pedro Catuogno  and Christian Olivera }
\address{Departamento de
 Matem\'{a}tica, Universidade Estadual de Campinas,\\ 13.081-970 -
 Campinas - SP, Brazil.}
\email{pedrojc@ime.unicamp.br ; colivera@ime.unicamp.br}

\begin{abstract}\vskip.2in
We introduced a new algebra of stochastic generalized functions
which contains to the space of stochastic distributions
$\mathcal{G}^{\ast}$, \cite{PoTi}. As an application, we prove
existence and uniqueness of the solution of a stochastic Cauchy
problem involving singularities.
\end{abstract}

\maketitle

\section {Introduction}

The algebras of generalized functions were introduced by J. F.
Colombeau \cite{col1} and has been studied and developed by many
authors (see \cite{Biag}, \cite{col1}, \cite{gkos} and
\cite{ober2} and references). These algebras of generalized
functions contain the Schwartz distributions, thus one can deal
with its multiplication and other types of nonlinearities. Many
applications have been carried out in various fields of
mathematics such as partial differential equations, Lie analysis,
local and microlocal analysis, probability theory, differential
geometry. In particular, the algebras of generalized functions
found applications in non-linear partial differential equations
(\cite{Biag}, \cite{col1}, \cite{gkos} and \cite{ober2}), where
the classical distributional methods are limited by the
impossibility to consistently define an intrinsic product of
distributions \cite{Schw}.

\noindent In recent years stochastic partial differential equations
have been studied actively. Often, solutions to such equations do
not exist in the usual sense, rather they are generalized functions.
It is possible consider the solutions as generalized functions in
the space-time variables or as generalized stochastic functions
which are point-wise defined in space and time. In this work we take
the latter approach, more precisely we consider stochastic process
in algebras of generalized functions, following to F. Russo and
their coauthors (see for instance \cite{albe}, \cite{albe2}
,\cite{ober4} and \cite{Russo}.

\noindent We considerer the following stochastic Cauchy problem,
\begin{equation}\label{intro}
 \left \{
\begin{array}{lll}
u_{t}& = & Lu  + u   W(t,x) \\
u(0,x) & = & f(x)
\end{array}
\right .
\end{equation}
where the product is taken in  the usual form,  $L$ is an
uniformly elliptic partial differential operator, $f\in
C_{b}^{2}(\mathbb{R}^{l})$ and $W$ is a space-time white noise
\cite{Hida}. There are a great interest into solve the problem
(\ref{intro}) (see for instance \cite{Chow}, \cite{HOBZ},
\cite{Kuni}, \cite{LORo}, \cite{liouz}, \cite{NuZa}, \cite{PVW}
and \cite{Wals}). We observe that the properties of the solutions
(It\^o-Stratonovich) of (\ref{intro}) depend crucially on the
noise type and its dimension (see \cite{liouz} and \cite{Wals}).

\noindent The aim of this article is solve the stochastic Cauchy
problem (\ref{intro}) without restriction on the dimension of the
white noise $W$. Our approach transfer the generalized nature of
the solutions to the stochastic component (see \cite{HOBZ}  for
more details). It is thus necessary to introduce random variables
with values in algebras of generalized functions. The fundamental
idea is to introduce a new algebra of generalized functions of
Colombeau type in the context of the Hida stochastic
distributions. This algebra of generalized functions contains the
stochastic distributions $\mathcal{G}^{\ast}$ introduced by J.
Potthoff and M. Timpel in \cite{PoTi}. In this context and under
general assumptions we show existence and uniqueness for the
stochastic Cauchy problem (\ref{intro}). Furthermore, others
aspects of these generalized functions are studied.

\noindent The plan of exposition is as follows: in section 2, we
provide some basic concepts on  classical white noise theory,
these are, Wiener-It\^o chaos expansions, stochastic distributions
and Wick products. In section 3, we introduce three definitions of
products of stochastic distributions and we study some properties.
In section 4, we give a new algebra of stochastic generalized
functions $\mathcal{G}$, this algebra contains to the stochastic
distributions $\mathcal{G}^{\ast}$ ( see \cite{PoTi}) and extends
the product in the test space $\mathcal{G}$. Moreover, we study
its elementary properties and we show  that the symmetric product
of stochastic distributions , defined in section 3, is associated
with the product in $\mathcal{G}$. Finally, in section 5, we study
a stochastic Cauchy problem with space-time depend noise white in
the context of these generalized functions.

\section{White Noise Theory }

\subsection{Preliminaries }

\noindent Let $\mathcal{S}(\mathbb{R}^{d})$ be the space of
infinitely differentiable functions from $\mathbb{R}^d$ to
$\mathbb{R}$ $(\mathbb{C})$ which together with all its
derivatives are of rapidly decreasing. We consider
$\mathcal{S}(\mathbb{R}^{d})$ with the topology given by the
family of seminorms
\[
\| f\|_{\alpha,\beta}=\sup\{|x^{\alpha}D^{\beta}f(x)| : x \in
\mathbb{R}^{d}\}
\]
where $\alpha, \beta \in \mathbb{N}_{0}^{d}$. It follows that
$\mathcal{S}(\mathbb{R}^{d})$ is a nuclear space. The Schwartz
space of tempered distributions is the dual space
$\mathcal{S}^{\prime}(\mathbb{R}^{d})$.

\noindent  We shall summarize the definitions and basic properties
of Hermite polynomials and Hermite functions. The $j$-th
{\it{Hermite polynomial}}, denoted by $h_j$, is defined to be

\begin{equation}\label{forpolHe}
    h_j(x)=(-1)^n e^{\frac{x^2}{2}} \frac{d^j}{dx^j}  e^{-\frac{x^2}{2}}
\end{equation}
for $j\in \mathbb{N}_{0}$.

\noindent We observe that
\begin{equation}\label{forpolHp}
      h_j(x) =2^{-\frac{j}{2}}\sum_{k=0}^{[j/2]}\frac{(-1)^k
      n!( \sqrt{2}x)^{j-2k}}{k!(j-2k)!}.
\end{equation}
The $j$-th {\it{Hermite function}} $\eta_j$  is given by
\begin{equation}\label{relfunpolh}
    \eta_{j}(x)=  (\sqrt{2\pi}n!)^{-\frac{1}{2}}
    e^{{-\frac{1}{4}x^2}}h_j(x)
\end{equation}
for $j\in \mathbb{N}_{0}$.

The following properties of the Hermite functions will be
extremely useful.
\begin{itemize}
\item $\{\eta_{j}: j \in \mathbb{N}_{0} \}$ is an orthonormal
basis of $L^{2}(\mathbb{R})$,

\item $\sup \{| \eta_j(x)|:x \in \mathbb{R}
\}=O(j^{-\frac{1}{4}})$,

\item The $\eta_j$ are eigenfunctions of the number operator
$N+1=\frac{1}{2}(-\frac{d^2}{dx^2}+x^2+1)$,
\[
(N+1)\eta_{j}=(j+1)\eta_{j}
\]
for all $j \in \mathbb{N}_{0}$.
\end{itemize}

\noindent The $\alpha$-th {\it{Hermite function}}
$\eta_{\alpha}:\mathbb{R}^d \rightarrow \mathbb{R}$ ($\alpha \in
\mathbb{N}_{0}^{d}$) is given by
\[
\eta_{\alpha}(x)= \prod_{i=1}^d \eta_{\alpha_{i}}(x_i).
\]
It follows that $\{ \eta_{\alpha} : \alpha \in \mathbb{N}_{0}^{d}
 \}$ is an orthonormal basis of $L^{2}(\mathbb{R}^{d})$.

\noindent Let us denote by $(N+1)_d=(N+1)^{\otimes d}$, the
differential operator
\[
(N+1)^{\otimes d}= \frac{1}{2}(-\frac{\partial^2}{\partial
x_d^2}+x_d^2+1)\cdot \cdot \cdot
\frac{1}{2}(-\frac{\partial^2}{\partial x_1^2}+x_1^2+1).
\]
It is easy to check that for $\alpha, \beta \in \mathbb{N}^d_0$,
\[
(N+1)_d^{\beta}(\eta_{\alpha})=(\alpha +1)^{\beta}\eta_{\alpha}
\]
where
$(\alpha+1)^{\beta}=\Pi_{i=1}^{d}(1+\alpha_{i})^{\beta_{i}}$.

\noindent The topology of $\mathcal{S}(\mathbb{R}^{d})$ has an
alternative description in terms of the family of norms $ \{ |
\cdot |_{\beta} : \beta\in \mathbb{N}_{0}^{d}\}$. The norm $|
\cdot |_{\beta}$ is given by
\[
|\varphi|^2_{\beta}:=|(N+1)_d^{\beta}(\varphi)|^2_{0}=\sum_{\alpha\in
\mathbb{N}^d_{0}}(\alpha+1)^{2\beta}<\varphi,\eta_{\alpha}>^2,
\]
where $<\varphi,\eta_{\alpha}>=\int \varphi(x)\eta_{\alpha}(x)dx$
are the Fourier-Hermite coefficients of the expansion of
$\varphi$.

\noindent It follows easily that the seminorm families $ \{ |
\cdot |_{\beta} : \beta\in \mathbb{N}_{0}^{d}\}$ and $ \{\| \cdot
\|_{\alpha,\beta} : \alpha,\beta\in \mathbb{N}_{0}^{d} \}$ are
equivalent on $\mathcal{S}(\mathbb{R}^{d})$.

\noindent The Hermite representation theorem for
$\mathcal{S}(\mathbb{R}^{d})$
($\mathcal{S}^{\prime}(\mathbb{R}^{d})$) states a topological
isomorphism from $\mathcal{S}(\mathbb{R}^{d})$
($\mathcal{S}^{\prime}(\mathbb{R}^{d})$) onto the space of
sequences $\mathbf{s}_{d}$ ($\mathbf{s}_{d}^{\prime}$).

\noindent Let $\mathbf{s}_{d}$ be the space of sequences
\[
\mathbf{s}_{d}=\{(a_\beta)\in
\ell^2(\mathbb{N}^{d})):\sum_{\beta}(\beta+1)^{2\alpha}\mid
a_{\beta}\mid^2<\infty, \; \mbox{for all } \; \alpha \in
\mathbb{N}_{0}^{d}\}.
\]

\noindent The space $\mathbf{s}$ is a locally convex space with the
sequence of norms
\[
| (a_\beta)|_\alpha = (\sum_{\beta}(\beta+1)^{2\alpha}\mid
a_{\alpha}\mid^2)^{\frac{1}{2}}
\]
\noindent The topological dual space to $\mathbf{s}_{d}$, denoted by
$\mathbf{s}_{d}^{\prime}$, is given by
\[
\mathbf{s}_{d}^{\prime}=\{(b_{\beta}): \mbox{for some
}\;(C,\alpha)\in\mathbb{R}\times \mathbb{N}_{0}^{d}, \; \mid
b_{\beta} \mid \leq C(\beta+1)^{\alpha} \mbox{ for all } \beta \},
\]
\noindent and the natural pairing of elements from $\mathbf{s}_{d}$
and $\mathbf{s}_{d}^{\prime}$, denoted by $\langle \cdot, \cdot
\rangle$, is given by
\[
\langle (b_{\beta}), (a_{\beta}) \rangle = \sum_{\beta}
b_{\beta}a_{\beta}
\]
for $(b_{\beta}) \in \mathbf{s}_{d}^{\prime}$ and $(a_{\beta}) \in
\mathbf{s}_{d}$.

\noindent It is clear that $\mathbf{s}_{d}^{\prime}$ is an algebra
with the pointwise operations:
\begin{eqnarray*}
(b_{\beta})+(b^{\prime}_{\beta})& = & (b_{\beta}+b^{\prime}_{\beta}) \\
(b_{\beta})\cdot(b^{\prime}_{\beta})& = &
(b_{\beta}b^{\prime}_{\beta}),
\end{eqnarray*}
and $\mathbf{s}_{d}$ is an ideal of $\mathbf{s}_{d}^{\prime}$.

\noindent The relation between $\mathbf{s}_{d}$
($\mathbf{s}_{d}^{\prime}$) and $\mathcal{S}(\mathbb{R}^{d})$
($\mathcal{S}^{\prime}(\mathbb{R}^{d})$) is induced by the Hermite
functions, via the Fourier-Hermite coefficients.
\begin{theorem}\label{rpreS}
N-representation theorem for $\mathcal{S}(\mathbb{R}^{d})$ and
$\mathcal{S}^{\prime}(\mathbb{R}^{d})$:

$\mathbf{a)}$ Let
$\mathbf{h}:\mathcal{S}(\mathbb{R}^{d})\rightarrow \mathbf{s}_{d}$
be the application
\[
\mathbf{h}(\varphi)=(<\varphi,\eta_{\beta}>).
\]
Then $\mathbf{h}$ is a topological isomorphism. Moreover,
\[
\|\mathbf{h}(\varphi)\|_\beta=\|\varphi\|_\beta
\]
for all $\varphi \in \mathcal{S}(\mathbb{R}^{d})$.

\noindent $\mathbf{b)}$ Let
$\mathbf{H}:\mathcal{S}^{\prime}(\mathbb{R}^{d})\rightarrow
\mathbf{s}_{d}^{\prime}$ be the application
$\mathbf{H}(T)=(T(\eta_{\beta}))$. Then $\mathbf{H}$ is a
topological isomorphism. Moreover, if $T \in
\mathcal{S}^{\prime}(\mathbb{R}^{d})$ we have that
\[
T=\sum _{\beta} T(\eta_{\beta})\eta_{\beta}
\]
in the weak sense and for all $\varphi \in
\mathcal{S}(\mathbb{R}^{d})$,
\[
T(\varphi)=\langle \mathbf{H}(T),\mathbf{h}(\varphi)\rangle.
\]
\end{theorem}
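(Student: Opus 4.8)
The plan is to prove parts $\mathbf{a)}$ and $\mathbf{b)}$ in sequence, deducing $\mathbf{b)}$ from $\mathbf{a)}$ by duality. For part $\mathbf{a)}$, I would first check that $\mathbf{h}(\varphi)=(<\varphi,\eta_\beta>)$ actually lands in $\mathbf{s}_d$: this is exactly the content of the norm identity $|\mathbf{h}(\varphi)|_\beta=|\varphi|_\beta$, which follows immediately from the formula $|\varphi|_\beta^2=\sum_{\alpha}(\alpha+1)^{2\beta}<\varphi,\eta_\alpha>^2$ recalled just before the theorem, together with the definition of the norm $|(a_\beta)|_\alpha$ on $\mathbf{s}_d$. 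Since the two families of seminorms $\{|\cdot|_\beta\}$ and $\{\|\cdot\|_{\alpha,\beta}\}$ were already noted to be equivalent on $\mathcal{S}(\mathbb{R}^d)$, the map $\mathbf{h}$ is a linear isometry for each seminorm, hence continuous with continuous inverse onto its image. Linearity is clear, and injectivity follows because $\{\eta_\alpha\}$ is an orthonormal basis of $L^2(\mathbb{R}^d)$, so $<\varphi,\eta_\alpha>=0$ for all $\alpha$ forces $\varphi=0$. Surjectivity is the one genuine point: given $(a_\beta)\in\mathbf{s}_d$, I would set $\varphi=\sum_\beta a_\beta\eta_\beta$ and argue that the partial sums form a Cauchy sequence in every seminorm $|\cdot|_\gamma$ (using $|\cdot|_\gamma$-orthogonality of the $\eta_\beta$ and the summability built into the definition of $\mathbf{s}_d$), so the series converges in $\mathcal{S}(\mathbb{R}^d)$ by completeness, and $\mathbf{h}(\varphi)=(a_\beta)$ by continuity of the coefficient functionals.

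For part $\mathbf{b)}$, I would transport everything through $\mathbf{h}$ by taking adjoints. Define $\mathbf{H}(T)=(T(\eta_\beta))$. First one checks $(T(\eta_\beta))\in\mathbf{s}_d'$: since $T$ is continuous on $\mathcal{S}(\mathbb{R}^d)$, there are $C$ and a seminorm $|\cdot|_\alpha$ with $|T(\psi)|\le C|\psi|_\alpha$; applying this to $\psi=\eta_\beta$ and using $|\eta_\beta|_\alpha=(\beta+1)^\alpha$ gives the polynomial bound $|T(\eta_\beta)|\le C(\beta+1)^\alpha$ that defines $\mathbf{s}_d'$. The weak expansion $T=\sum_\beta T(\eta_\beta)\eta_\beta$ follows by testing against an arbitrary $\varphi\in\mathcal{S}(\mathbb{R}^d)$: write $\varphi=\sum_\beta<\varphi,\eta_\beta>\eta_\beta$ (convergent in $\mathcal{S}$ by part $\mathbf{a)}$), apply continuity of $T$ to pass the sum outside, and recognize the result as $\sum_\beta T(\eta_\beta)<\varphi,\eta_\beta>=\langle\mathbf{H}(T),\mathbf{h}(\varphi)\rangle$; this simultaneously establishes the pairing formula $T(\varphi)=\langle\mathbf{H}(T),\mathbf{h}(\varphi)\rangle$. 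From that formula, injectivity of $\mathbf{H}$ is immediate (if $\mathbf{H}(T)=0$ then $T(\varphi)=0$ for all $\varphi$), and surjectivity follows by reversing the construction: given $(b_\beta)\in\mathbf{s}_d'$, the assignment $\varphi\mapsto\langle(b_\beta),\mathbf{h}(\varphi)\rangle$ is a well-defined continuous linear functional on $\mathcal{S}(\mathbb{R}^d)$ — continuity because the polynomial bound on $(b_\beta)$ is absorbed by a single seminorm $|\cdot|_\alpha$ of $\varphi$ with room to spare — and it maps to $(b_\beta)$ under $\mathbf{H}$. Finally, $\mathbf{H}$ and $\mathbf{H}^{-1}$ are continuous for the strong (or weak-$*$) topologies because they are the adjoints of the topological isomorphism $\mathbf{h}$ and its inverse, with the identification $\mathbf{s}_d'=(\mathbf{s}_d)'$ under the stated pairing.

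The main obstacle, and the only step requiring real care rather than bookkeeping, is the convergence analysis in the surjectivity argument of part $\mathbf{a)}$: one must verify that membership in $\mathbf{s}_d$ — summability of $\sum_\beta(\beta+1)^{2\alpha}|a_\beta|^2$ for \emph{every} $\alpha$ — is exactly strong enough to make $\sum_\beta a_\beta\eta_\beta$ converge in \emph{every} seminorm of $\mathcal{S}(\mathbb{R}^d)$ simultaneously, which is where the nuclearity / Hermite-basis structure is genuinely used. Everything else (isometry, injectivity, the duality transport, the polynomial-bound estimates for $\mathbf{s}_d'$) is routine once the seminorm identity $|\varphi|_\beta^2=\sum_\alpha(\alpha+1)^{2\beta}<\varphi,\eta_\alpha>^2$ and the equivalence of the two seminorm families — both already recorded in the excerpt — are taken as given. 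I would also remark that, strictly speaking, the theorem as displayed is stated for the sequence space $\mathbf{s}_d$ while the norm on $\mathbf{s}$ is written with a mild index clash ($a_\alpha$ versus $a_\beta$); I would read this as the obvious $|(a_\beta)|_\alpha=(\sum_\beta(\beta+1)^{2\alpha}|a_\beta|^2)^{1/2}$ and proceed accordingly.
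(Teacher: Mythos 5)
Your argument is correct and is essentially the standard proof of the N-representation theorem; the paper itself gives no proof, simply citing Reed--Simon (vol.\ 1, p.\ 143), where the argument runs exactly along the lines you describe (isometry of the coefficient map in each seminorm, surjectivity via Cauchy partial sums of $\sum_\beta a_\beta\eta_\beta$, and duality transport for part b). The one point worth stating explicitly in your surjectivity step is the identity $|\eta_\beta|_\gamma=(\beta+1)^\gamma$ coming from $(N+1)_d^{\gamma}\eta_\beta=(\beta+1)^{\gamma}\eta_\beta$, which is what makes the tail estimate $\sum_{\beta>N}(\beta+1)^{2\gamma}|a_\beta|^2\to 0$ literally equal to the seminorm of the tail of the series; you use this implicitly and it is the only place the Hermite structure enters.
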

\begin{proof}
See for instance, M. Reed and B. Simon \cite{Red1} pp. 143.
\end{proof}
\noindent We say that the sequences $\mathbf{h}(\varphi)$ and
$\mathbf{H}(T)$ are the {\it Hermite coefficients} of the tempered
function $\varphi$ and the tempered distribution $T$, respectively. \\

\subsection{White noise and chaos expansions}
\noindent In this subsection we briefly recall some of the basic
concepts and results of White noise analysis. Our presentation and
notation will follow \cite{Hida} and \cite{HOBZ}. According to the
Bochner-Minlos theorem, there exits an unique probability $\mu$ on
$\mathcal{B}_d$ the Borel $\sigma$-field of
$\mathcal{S}^{\prime}(\mathbb{R}^{d})$ such that for each $\phi
\in \mathcal{S}(\mathbb{R}^{d})$,
\begin{equation}\label{medi}
\int_{\mathcal{S}^{\prime}(\mathbb{R}^{d})} e^{i(\omega, \phi)} \
d\mu(\omega)=e^{-\frac{1}{2}|\phi|_{L^{2}(\mathbb{R}^{d})}^{2}}.
\end{equation}
The probability space $(\mathcal{S}^{\prime}(\mathbb{R}^{d}),
\mathcal{B}_d, \mu )$ is then called the {\it{White noise
probability space}}. We will denote by $(L^{2})$ the space $L^{2}(
\mathcal{S}^{\prime}(\mathbb{R}^{d}), \mathcal{B}, \mu)$.

\noindent  Let $\{\alpha^j : j \in \mathbb{N}_0 \}$ be an
enumeration of the set of multi-indices $\{ \alpha: \alpha \in
\mathbb{N}_{0}^{d}\}$. We can assume that this enumeration has the
following property: If $k<l$ then $\alpha_1^k+...+\alpha_d^k \leq
\alpha_1^l+...+\alpha_d^l$ (see \cite{HOBZ} pag. 19  for details).
It is clear that $\{ \eta_{\alpha}: \alpha \in \mathbb{N}_0^d \}$
can be enumerated by $\{ \eta_{j}: j\in \mathbb{N}_{0}\}$. \noindent
We denote by $ \mathcal{J}$ the set of sequences of nonnegative
integers with compact support.

\noindent The $\alpha$-th {\it{Hermite polynomial}}
$H_{\alpha}:\mathcal{S}^{\prime}(\mathbb{R}^d) \rightarrow
\mathbb{R}$ ($\alpha \in \mathcal{J}$) is given by
\[
H_{\alpha}(\omega)= \prod_{j=1}^{\infty}
h_{\alpha_j}(<\omega,\eta_{j}>).
\]
It follows that $\{ H_{\alpha} : \alpha \in \mathcal{J}
 \}$ is an orthogonal basis of $(L^{2})$ and $\|H_{\alpha} \|^{2}=\alpha
 !$.
\noindent The Wiener-It\^o chaos expansion theorem says that if
$f\in (L^{2})$, then there exists an unique sequence of functions
$f_{n}\in L^{2}(\mathbb{R}^{d})^{\hat{\otimes}n}$ such that
\[
f(\omega)=\sum_{n=0}^{\infty} I_{n}(f_{n})(\omega)
\]
where $I_{n}: L^{2}(\mathbb{R}^{d})^{\hat{\otimes}n} \rightarrow
(L^2)$ is the multiple Wiener integral of order $n$.

\noindent Moreover, we have the isometry

\[
\mathbb{E}[f^{2}]=\sum_{n=0}^{\infty} n! | f_{n} |_0^{2}.
\]
where we denote by $|\cdot |_0$ the norm $|\cdot
|_{L^{2}(\mathbb{R}^{d})^{\hat{\otimes}n}}$.

 \noindent Let $f\in \mathcal{S}(\mathbb{R})$, from (\ref{medi}) for $d=1$ we have that
 $<\cdot,f>$ is a Gaussian random variable with expectation 0 and
 variance $| f |_0$. We observe that for $t\in
 \mathbb{R}$,
\begin{equation}\label{Brow}
B(t):=<\cdot, 1_{[0,t]}>
\end{equation}
is a Brownian motion. Moreover, if $f\in L^{2}(\mathbb{R})$ we
have that
\[
 <\cdot, f>=I_1(f).
\]

\subsection{Stochastic distributions}

\noindent The space of stochastic distributions (also called of
space of regular Hida distributions) $\mathcal{G}^{\ast}$ has been
introduced by J. Potthoff and M. Timpel in \cite{PoTi}. Their
development was motivated by stochastic differential equations of
Wick type and its applications, see for instance \cite{KOPU},
 \cite{BePo}, \cite{BeThe} and \cite{GKS}.

\noindent Let $N$ be the Ornstein-Uhlenbeck operator on $(L^2)$,
we recall that
\[
N(I_n(f))=nI_n(f)
\]
for all $n \in \mathbb{N}_{0}$ and $f \in
L^{2}(\mathbb{R}^{d})^{\hat{\otimes}n}$.

\noindent Let us denote by $\mathcal{G_{p}}$ ($p \in
\mathbb{N}_0$) the $(L^{2})$-domain of $exp(p N)$. It is clear
that $\mathcal{G_{p}}$ with the norm
\[
\| F \|_{p}^{2}:= \| exp(p N)F \|_{0}^{2}=\sum_{n=0}^{\infty}e^{2p
n}n!|F_n|_0^2
\] is a
Hilbert space.

\noindent Let $\mathcal{G}$ be the projective limit of the family
$\{ \mathcal{G_{p}}, p\geq 0 \}$ and $\mathcal{G}^{\ast}$ be its
dual. It follows that $\mathcal{G}^{\ast}$ is the inductive limit
of the family $\{ \mathcal{G_{-p}}, p\geq 0 \}$, where
$\mathcal{G_{-p}}$ is the dual of $\mathcal{G_{p}}$. In this way
we obtained the Gelfand triple
\[
 \mathcal{G} \subset (L^{2})  \subset  \mathcal{G}^{\ast}.
\]
We observe that the duality in this Gelfand triple is given by
\[
\ll  F, f \gg=\sum_{n=0}^{\infty} n! \ <F_{n},f_{n}>
\]
where $f=\sum_{n=0}^{\infty} I_{n}(f_{n})  \in \mathcal{G}$ and
$F= \sum_{n=0}^{\infty} I_{n}(F_{n}) \in \mathcal{G}^{\ast}$.
\begin{example}
The Brownian motion $B(t)=<\cdot, 1_{[0,t]}>$ is an element of
$\mathcal{G}$. J. Potthof and M. Timpel \cite{PoTi} showed that
for $x \in \mathbb{R}$ the Donsker delta function $\delta_x \circ
B(t)$ belong to $\mathcal{G}^{\ast}$.
\end{example}

\noindent Let $F \in \mathcal{G}^{\ast}$ with Wiener-It\^o
expansion $\sum_{i=0}^{\infty}I_n(F_n)$. The {\it$S$-transform} of
$F \in \mathcal{G}^{\ast}$, denoted by $S(F)$, is the function
from $\mathcal{S}(\mathbb{R}^d)$ to $\mathbb{C}$ given by
\[
S(F)(\varphi)=\sum_{n=0}^{\infty}<F_n,\varphi^{\otimes n}>.
\]

\noindent The {\it Wick product} $ F \diamond  G $ of $F$ and $G$
in $\mathcal{G}^{*}$, is given by $S^{-1}(S(F)S(G)) $. This
product is well defined and continuous from $\mathcal{G}^{*}
\times \mathcal{G}^{*}$ to $\mathcal{G}^{*}$, see \cite{BeThe},
\cite{HuYa} and \cite{PoTi}.

\noindent We observe that the Wick product is associative,
commutative and $\mathcal{G}$ and $\mathcal{G}^{\ast}$  are
algebras respect to the Wick product. An easy computation shows
that if $F=\sum_{i=0}^{\infty}I_i(F_i)$ and
$G=\sum_{i=0}^{\infty}I_i(G_i)$ belong to $\mathcal{G}^{*}$ then
the Wiener-It\^o coefficients of the Wick product $ F \diamond
G=\sum_{i=0}^{\infty}I_i(H_i) $ verify that
\[
H_{i}= \sum_{i=m+n} \ F_{n} \hat{\otimes} G_{m}.
\]

\noindent The {\it Wick exponential of} $f \in L^2(\mathbb{R}^d)$,
denoted by $e^{: <\cdot, f>}$, is the element of $(L^2)$ defined
by
\begin{equation}\label{exwick}
\sum_{m=0}^{\infty}\frac{1}{m!}I_m(f^{\otimes^m})=e^{ <\cdot, f>-
\frac{1}{2}|f|_0^2}.
\end{equation}
\noindent We observe the following useful identity
\begin{equation}\label{exwick1}
\| e^{: <\cdot, f>}
\|^2_p=\sum_{m=0}^{\infty}\frac{e^{2pm}}{m!}|f|_0^{2m}=e^{e^{2p}|f|_0^2}.
\end{equation}

\noindent The space $\mathcal{G}$ is an algebra for the pointwise
multiplication and this multiplication is continuous from
$\mathcal{G}\times \mathcal{G}$ into $\mathcal{G}$, see Theorem 2.5
of \cite{PoTi} ($d=1$) and \cite{BeThe} for the general case. In
particular, for all $m \in \mathbb{N}_0$ there exists $r, s \in
\mathbb{N}_0$ and a constant $C_m>0$ such that
\begin{equation}\label{eq1}
\|\varphi \psi \|_m \leq C_m \| \varphi \|_r \| \psi \|_s
\end{equation}
for all $\varphi, \psi \in \mathcal{G}$.

\noindent Thus  we can define the product of a distribution by a
test function.

\begin{definition} Let $\varphi \in \mathcal{G}$ and $F\in \mathcal{G}^{*}$. We define $\varphi F \in \mathcal{G}^{*}$
by
\[
\ll \varphi F, \psi \gg=\ll F, \varphi \psi \gg
\]
for all $\psi \in \mathcal{G}$.
\end{definition}
\noindent It is clear from (\ref{eq1}) that  $\varphi F$ is well
defined.

\noindent Let $\varphi=\sum_{n=0}^{\infty} I_{n}(\varphi_{n})\in
\mathcal{G}$ and $F=\sum_{m=0}^{\infty} I_{m}(F_{m}) \in
\mathcal{G}^{\ast}$. Then $\varphi F=\sum_{j=0}^{\infty}
I_{j}(H_{j})$ where

\begin{equation}\label{ten2}
H_{j}=\sum_{j=m+n} \sum_{k=0}^{m \wedge n} k! \  {n+k \choose k} \
{m+k \choose k} \ \varphi_{n+k} \hat{\otimes}_{k} F_{m+k}.
\end{equation}
Here $\varphi_{n} \hat{\otimes}_{k} F_{m} $ is the symmetrization
of $\varphi_{n} \otimes_{k} F_{m}$. See for instance \cite{HuYa}
pag. 70.

\section{Distributional products via approximation in chaos}

In this  section, strongly inspired in \cite{calo}, we define
three new products of stochastic distributions. A possible
approach to define a product of a pair of distributions is
approximate one of them by test functions, multiply this
approximation by the other distribution, and pass to a limit. In
the case of the sequential approach (see \cite{Miku} pp 242,
\cite{ober2}) the approximation is done by convolution with
$\delta$-sequences, in this work we propose take the approximation
given by the Wiener-It\^o chaos expansion of the distribution.

\noindent In order to define the chaos approximation we consider
the projections $\Pi_{m}: \mathcal{G}^{\ast} \rightarrow
\mathcal{G}$ ($m \in \mathbb{N}_0$). These projections are defined
by
\[
 \Pi_{m}F=\sum_{n=0}^{m} I_{n} ( F_{n}),
\]
where $F=\sum_{n=0}^{\infty} I_{n} (F_{n})\in \mathcal{G}^{\ast}$.

\begin{proposition}\label{app1} $\mathbf{a)}$ Let $f=\sum_{n=0}^{\infty} I_{n} ( f_{n})\in \mathcal{G}$. Then
\[
\lim_{m \rightarrow \infty} \Pi_{m}f = f.
\]

$\mathbf{b)}$ Let $F=\sum_{n=0}^{\infty} I_{n} ( F_{n})\in
\mathcal{G}^{\ast}$. Then
\[
\lim_{m \rightarrow \infty}\Pi_{m}F = F.
\]
\end{proposition}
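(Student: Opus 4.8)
The plan is to prove both convergence statements directly from the explicit norm descriptions of the spaces $\mathcal{G}_p$ and $\mathcal{G}_{-p}$, by estimating the tail of the Wiener--It\^o expansion. For part $\mathbf{a)}$ the argument is the standard fact that in a countably Hilbert (projective limit) space, truncation of an orthogonal expansion converges in every seminorm. Concretely, for $f=\sum_{n=0}^\infty I_n(f_n)\in\mathcal{G}$ and any fixed $p\in\mathbb{N}_0$ we have
\[
\|f-\Pi_m f\|_p^2=\sum_{n=m+1}^\infty e^{2pn}n!\,|f_n|_0^2,
\]
which is the tail of the convergent series $\|f\|_p^2=\sum_{n=0}^\infty e^{2pn}n!\,|f_n|_0^2<\infty$, hence tends to $0$ as $m\to\infty$. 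Since $p$ was arbitrary, $\Pi_m f\to f$ in the projective limit topology of $\mathcal{G}$. There is nothing subtle here beyond recalling that convergence in $\mathcal{G}$ means convergence in each $\|\cdot\|_p$.

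For part $\mathbf{b)}$ the situation is genuinely different because $\mathcal{G}^\ast$ carries the inductive limit topology, so I cannot simply estimate a single fixed norm; convergence of a sequence in an inductive limit of Hilbert spaces $\mathcal{G}^\ast=\bigcup_p\mathcal{G}_{-p}$ requires, a priori, that the whole sequence eventually lie in one step $\mathcal{G}_{-p}$ and converge there. The first step is therefore to fix $F=\sum_{n=0}^\infty I_n(F_n)\in\mathcal{G}^\ast$; by definition of the inductive limit there is some $p_0$ with $F\in\mathcal{G}_{-p_0}$, i.e. $\|F\|_{-p_0}^2=\sum_{n=0}^\infty e^{-2p_0 n}n!\,|F_n|_0^2<\infty$. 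Each $\Pi_m F$ is a finite sum, hence lies in $\mathcal{G}\subset\mathcal{G}_{-p_0}$, so the entire sequence $(\Pi_m F)_m$ together with $F$ lives in the single Hilbert space $\mathcal{G}_{-p_0}$. Then exactly as in part $\mathbf{a)}$,
\[
\|F-\Pi_m F\|_{-p_0}^2=\sum_{n=m+1}^\infty e^{-2p_0 n}n!\,|F_n|_0^2\longrightarrow 0,
\]
being the tail of a convergent series. Finally, since the inclusion $\mathcal{G}_{-p_0}\hookrightarrow\mathcal{G}^\ast$ is continuous, convergence in $\mathcal{G}_{-p_0}$ implies convergence in $\mathcal{G}^\ast$, which gives the claim.

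The one point that deserves care, and which I expect to be the only real obstacle, is justifying that a topological statement in the inductive limit $\mathcal{G}^\ast$ can be reduced to a statement in a single step $\mathcal{G}_{-p_0}$: for general (LF)- or inductive-limit spaces this reduction can fail for arbitrary convergent sequences, but here it works for the cheap reason that the approximating sequence is manufactured from $F$ itself and automatically stays in the same Hilbert step as $F$. I would state this explicitly rather than appeal to a general regularity theorem about inductive limits. Everything else is just recognizing the truncated norms as tails of convergent nonnegative series.
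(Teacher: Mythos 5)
Your proof is correct and follows essentially the same route as the paper: in both parts one recognizes $\|f-\Pi_m f\|_p^2$ (respectively $\|F-\Pi_m F\|_{-q}^2$ for a single $q$ with $F\in\mathcal{G}_{-q}$) as the tail of a convergent series. Your additional remark that the sequence $(\Pi_m F)$ stays in the one Hilbert step $\mathcal{G}_{-q}$, so that convergence there suffices for convergence in the inductive limit, is a point the paper leaves implicit but is exactly the right justification.
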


\begin{proof}
$\mathbf{a)}$ Let $p>0$ and $\varepsilon>0$. Since $f \in
\mathcal{G}$, there exists $m_{0} \in \mathbb{N}_0$  such that
\[
\sum_{n=m+1}^{\infty} n! e^{2pn} \ | f_{n}|_{0}^{2}<
\varepsilon^{2}
\]
for $m \geq m_0$.

Then
\begin{eqnarray*}
\| \Pi_{m}f - f\|_{p}^{2} & = &  \sum_{n=m+1}^{\infty} n! e^{2pn}
\ | f_{n}|_{0}^{2}\\ & < & \varepsilon^{2}
\end{eqnarray*}
for $m\geq m_{0}$. We conclude that $\lim_{m \rightarrow \infty}
\Pi_{m}f = f$.

\noindent $\mathbf{b)}$ Let $\varepsilon >0$. Since  $F\in
\mathcal{G}^{\ast}$ there exists $q \in \mathbb{N}_0$ such that
\[
 \| F\|_{-q}^{2} =\sum_{n=0}^{\infty} n! e^{-2qn} \ | F_{n}|_{0}^{2} <
 \infty.
\]
Then there exists  $m_{0} \in \mathbb{N}_0$ such that
\[
 \sum_{n=m+1}^{\infty} n!  e^{-2qn} \ | F_{n}|_{0}^{2}<
 \varepsilon^{2}.
\]
for $m \geq m_0$.

Thus
\begin{eqnarray*}
\| \Pi_{m}F - F\|_{-q}^{2} & = &  \sum_{n=m+1}^{\infty} n!
e^{-2qn} \  | F_{n}|_{0}^{2} \\
& < & \varepsilon^{2},
\end{eqnarray*}
for $m\geq m_{0}$.
\end{proof}

\noindent Now, we introduce the following products of stochastic
distributions.

\begin{definition}
Let $F$ and $G$ be stochastic distributions. The products $F
\bullet_{r} G$, $F \bullet_{l} G$ and $F \bullet G$ are, by
definition,
\begin{equation}\label{1dh}
F \bullet_{r} G=\lim_{m \rightarrow \infty}F (\Pi_{m} G)
\end{equation}

\begin{equation}\label{2dh}
F\bullet_{l} G=\lim_{m \rightarrow \infty} ( \Pi_{m} F)G
\end{equation}

\begin{equation}\label{3dh}
F\bullet G=\lim_{m \rightarrow \infty} ( \Pi_{m} F) (\Pi_{m} G).
\end{equation}
\noindent where the limits are taking in the weak sense.
\end{definition}

\begin{proposition}\label{extendpro}
The products (\ref{1dh}), (\ref{2dh}) and (\ref{3dh}) extend the
product of elements of $\mathcal{G}$ and $\mathcal{G}^{\ast}$.
\end{proposition}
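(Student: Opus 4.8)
The plan is to verify that each of the three products coincides with the already-defined product $\varphi F$ (and with the pointwise product on $\mathcal{G}$) whenever the factors are regular enough for that product to make sense. There are essentially two cases to treat: (i) both factors lie in $\mathcal{G}$, where the existing product is pointwise multiplication, continuous by \eqref{eq1}; and (ii) one factor lies in $\mathcal{G}$ and the other in $\mathcal{G}^{\ast}$, where the existing product is the one from the Definition preceding \eqref{ten2}. By symmetry of the roles of $\bullet_r$ and $\bullet_l$ it suffices to handle, say, $\varphi \in \mathcal{G}$, $F \in \mathcal{G}^{\ast}$ and show $\varphi \bullet_r F = \varphi \bullet_l F = \varphi \bullet F = \varphi F$; the case of two test functions is then a special case plus the observation that the limits are actually strong limits in $\mathcal{G}$.

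First I would treat $\bullet_l$, which is the easiest: by Proposition \ref{app1}$\mathbf{a)}$, $\Pi_m \varphi \to \varphi$ strongly in $\mathcal{G}$, hence by continuity of the product $\mathcal{G}\times\mathcal{G}^{\ast}\to\mathcal{G}^{\ast}$ (which follows from \eqref{eq1} by duality, exactly as in the remark that $\varphi F$ is well defined) we get $(\Pi_m\varphi)F \to \varphi F$ in $\mathcal{G}^{\ast}$, a fortiori weakly. For $\bullet_r$ I would fix a test function $\psi \in \mathcal{G}$ and compute
\[
\ll \varphi(\Pi_m F), \psi \gg = \ll \Pi_m F, \varphi\psi \gg \longrightarrow \ll F, \varphi\psi\gg = \ll \varphi F, \psi\gg,
\]
where the convergence is Proposition \ref{app1}$\mathbf{b)}$ applied to $F$ (weak convergence $\Pi_m F \to F$ tested against the fixed element $\varphi\psi \in \mathcal{G}$). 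For $\bullet$ one writes $(\Pi_m\varphi)(\Pi_m F) - \varphi F = (\Pi_m\varphi - \varphi)(\Pi_m F) + \varphi(\Pi_m F - F)$ and handles the second term as for $\bullet_r$; the first term, paired with $\psi$, equals $\ll \Pi_m F, (\Pi_m\varphi-\varphi)\psi\gg$, and since $F$ lies in some fixed $\mathcal{G}_{-q}$ one bounds this by $\|F\|_{-q}\,\|(\Pi_m\varphi-\varphi)\psi\|_q \le \|F\|_{-q}\,C_q\|\Pi_m\varphi-\varphi\|_r\|\psi\|_s \to 0$ by \eqref{eq1} and Proposition \ref{app1}$\mathbf{a)}$. (Uniform control in $m$ of $\|\Pi_m F\|_{-q}$ is immediate since truncation only removes nonnegative terms from the defining sum.)

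Finally, for two factors $\varphi,\psi \in \mathcal{G}$ the same arguments show $\varphi\bullet_r\psi = \varphi\bullet_l\psi = \varphi\bullet\psi = \varphi\psi$, with convergence even in the strong topology of $\mathcal{G}$ by continuity of pointwise multiplication; restricting the weak-$\mathcal{G}^{\ast}$ limits back to $\mathcal{G}$ gives consistency. The only genuinely delicate point is the $\bullet$ product, where one must split the difference into two terms and use the uniform bound on $\|\Pi_m F\|_{-q}$ together with the quantitative estimate \eqref{eq1}; the rest is a routine combination of Proposition \ref{app1} with the continuity of the relevant multiplications.
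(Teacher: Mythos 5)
Your argument is correct and follows essentially the same route as the paper, whose entire proof is the one-line appeal to the (bi-)continuity of the products $\mathcal{G}\times\mathcal{G}\to\mathcal{G}$ and $\mathcal{G}\times\mathcal{G}^{\ast}\to\mathcal{G}^{\ast}$; you have simply made explicit how that continuity, combined with Proposition \ref{app1} and the estimate (\ref{eq1}) (together with the uniform bound $\|\Pi_m F\|_{-q}\le\|F\|_{-q}$ needed for the $\bullet$ product), yields the three weak limits and identifies them with $\varphi F$.
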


\begin{proof}
It follows from the bi-continuity of the product of elements of
$\mathcal{G}$ and $\mathcal{G}^{\ast}$.
\end{proof}

\noindent We recall that two elements
$F=\sum_{n=0}^{\infty}I_n(F_n)$ and
$G=\sum_{n=0}^{\infty}I_n(G_n)$ of $\mathcal{G}^{\ast}$ are
strongly independent (see F. Benth and J. Potthoff \cite{BePo}),
if there exists two intervals $I_F$ and $I_G$ whose intersection
has Lebesgue measure zero such that $supp(F_n)\subseteq I_F^n$ and
$supp(G_n)\subseteq I_G^n$ for all $n \in \mathbb{N}$.

\begin{proposition} Let $F$ and $G$ be strongly
independent in $\mathcal{G}^{\ast}$. Then
\[
F \bullet G = F \diamond G.
\]
\end{proposition}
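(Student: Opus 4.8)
The plan is to reduce the identity to the chaos-expansion formula (\ref{ten2}) for the ordinary product, using that strong independence annihilates every partial contraction of positive order. First I would fix $N \in \bbN_0$ and observe that the truncations $\Pi_N F = \sum_{n=0}^{N} I_n(F_n)$ and $\Pi_N G = \sum_{n=0}^{N} I_n(G_n)$ are again strongly independent, with the \emph{same} pair of intervals $I_F, I_G$: their chaos coefficients are $F_n, G_n$ for $n \le N$ and $0$ otherwise, so the support conditions $supp(F_n) \subseteq I_F^n$ and $supp(G_n)\subseteq I_G^n$ are inherited. Since $\Pi_N F, \Pi_N G \in \mathcal{G} \subseteq \mathcal{G}^{\ast}$, the product $(\Pi_N F)(\Pi_N G)$ is defined and its Wiener-It\^o coefficients are given by (\ref{ten2}), with $\varphi = \Pi_N F$ and with $\Pi_N G$ playing the role of $F$ there.

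Next I would show that for $k \ge 1$ the partial contraction $(\Pi_N F)_{n+k}\,\hat{\otimes}_{k}\,(\Pi_N G)_{m+k}$ vanishes. Writing out $(\Pi_N F)_{n+k}\otimes_{k}(\Pi_N G)_{m+k}$, it is obtained by integrating $F_{n+k}(x_1,\dots,x_n,u_1,\dots,u_k)$ against $G_{m+k}(y_1,\dots,y_m,u_1,\dots,u_k)$ over the $k$ shared variables $u_1,\dots,u_k$; by the support hypotheses the integrand can be nonzero only when $u_1 \in I_F \cap I_G$, a set of Lebesgue measure zero, so the integral vanishes for almost every $(x,y)$ and the contraction is $0$ in $L^2$, and symmetrizing does not change this. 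Hence in (\ref{ten2}) only the $k=0$ term survives, and since $\hat{\otimes}_0$ is the symmetrized tensor product $\hat{\otimes}$, the coefficients of $(\Pi_N F)(\Pi_N G)$ reduce to $\sum_{j = m+n} F_n \hat{\otimes} G_m$ (with $F_n, G_m$ read as $0$ beyond index $N$). By the chaos formula for the Wick product recalled in Section~2 this is exactly $(\Pi_N F)\diamond(\Pi_N G)$, so
\[
(\Pi_N F)(\Pi_N G) = (\Pi_N F)\diamond(\Pi_N G) \quad\text{for every } N.
\]

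Finally I would pass to the limit $N \to \infty$. By Proposition~\ref{app1} we have $\Pi_N F \to F$ and $\Pi_N G \to G$ in $\mathcal{G}^{\ast}$, and since the Wick product is continuous from $\mathcal{G}^{\ast}\times\mathcal{G}^{\ast}$ to $\mathcal{G}^{\ast}$, the right-hand side above converges to $F\diamond G$ in $\mathcal{G}^{\ast}$, hence weakly; the left-hand side converges weakly to $F\bullet G$ by definition (\ref{3dh}) (and the displayed identity shows in particular that this weak limit exists). Uniqueness of weak limits then yields $F \bullet G = F\diamond G$. The only genuinely non-formal point, and the one I would treat most carefully, is the vanishing of the positive-order contractions: one must check that strong independence in the sense defined (disjointness up to a null set of the intervals carrying the symmetrized coefficients $F_n$, $G_m$ for all $n$) really forces $\hat{\otimes}_k = 0$ for all $k \ge 1$ and all $N$, and that the symmetrization built into $\hat{\otimes}_k$ cannot resurrect a nonzero term; everything else is bookkeeping with (\ref{ten2}), Proposition~\ref{app1}, and the continuity of $\diamond$.
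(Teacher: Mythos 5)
Your proposal is correct and follows essentially the same route as the paper: truncate $F$ and $G$, expand $(\Pi_m F)(\Pi_m G)$ by the contraction/product formula, use strong independence to annihilate all contractions of order $\ge 1$ so that only the tensor-product term survives (which is the Wick product of the truncations), and pass to the limit using Proposition~\ref{app1} and the continuity of $\diamond$. Your explicit null-set justification for the vanishing of the positive-order contractions and the remark that the identity itself guarantees existence of the weak limit defining $F \bullet G$ are details the paper leaves implicit, but the argument is the same.
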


\begin{proof} Let $F=\sum_{n=0}^{\infty} I_{n}(F_{n})$ and $G=\sum_{k=0}^{\infty}
I_{k}(G_{k})$ be strongly independent in $\mathcal{G}^{\ast}$. It
is clear that $\lim_{m \rightarrow \infty} ( \Pi_{m} F) (\Pi_{m}
G)$ is equal to
\[
\lim_{m \rightarrow \infty} \sum_{n=0}^{m} \sum_{k=0}^{m}
\sum_{j=0}^{n\wedge k} k! \ {n \choose j} {k \choose j}
I_{n+k-2j}(F_{n} \hat{\otimes}_{j} G_{k}).
\]

\noindent Since $F$ and $G$ are strongly independent in
$\mathcal{G}^{\ast}$,
\[
\sum_{j=0}^{n\wedge k} k! \ {n \choose j} {k \choose j}
I_{n+k-2j}(F_{n} \hat{\otimes}_{j} G_{k})=I_{n+k}(F_{n}
\hat{\otimes} G_{k}).
\]
Combining the above equalities we conclude that
\begin{eqnarray*}
F\bullet G & = & \lim_{m \rightarrow \infty} ( \Pi_{m} F) (\Pi_{m}
G)  \\
& = & \lim_{m \rightarrow \infty} \sum_{n=0}^{m} \sum_{k=0}^{m}
I_{n+k}(F_{n} \hat{\otimes} G_{k}) \\
& = & F \diamond G.
\end{eqnarray*}

\end{proof}

\section{Stochastic generalized functions}

\noindent In order to introduce an algebra of Colombeau type such
that include the stochastic distributions we consider
$\mathcal{G}^{\mathbb{N}_0}$ the space of sequences of
$\mathcal{G}$. It is clear that $\mathcal{G}^{\mathbb{N}_0}$ has
the structure of an associative, commutative algebra with the
natural operations:

\begin{eqnarray}\label{operaS}
(\varphi_{n})+(\psi_{n})& = & (\varphi_{n}+ \psi_{n}) \\
a(\varphi_{n})& = & (a\varphi_{n}) \\
(\varphi_{n})\cdot (\psi_{n})& = & (\varphi_{n} \psi_{n})
\end{eqnarray}
where $(\varphi_{n})$ and $(\psi_{n})$ are in $\mathcal{G}$ and $a
\in \mathbb{R}$.

\noindent We introduce the following sequence spaces. Let
$\mathbf{e}$ be given by
\[
\mathbf{e}=\{(a_n)\in \ell^2:  \lim_{n\rightarrow\infty}e^{nm}
a_n=0, \; \mbox{for all } \; m \in \mathbb{N}_0\}.
\]
We consider $\mathbf{e}$ equipped with the sequence of norms ($m
\in \mathbb{N}_0$)
\[
| (a_n)|_m = (\sum_{n=0}^{\infty}  e^{2nm}  \mid
a_n\mid^2)^{\frac{1}{2}}
\]
or with the equivalent sequence of norms
\[
\mid(a_{n})\mid_{m,\infty} =  \sup_{n} e^{nm}| a_n|.
\]
It is clear that $\mathbf{e}$ is a locally convex space.

\noindent Let us denote by $\mathbf{e}^{\prime}$ the topological
dual space to $\mathbf{e}$ (see \cite{val}). It follows that
$\mathbf{e}^{\prime}$ is given by
\[
\mathbf{e}^{\prime}=\{(b_n): \mbox{for some
}\;(C,m)\in\mathbb{R}\times \mathbb{N}_0, \; \mid b_n \mid \leq
Ce^{2nm} \mbox{ for all } n \}.
\]
\noindent The natural pairing of elements from $\mathbf{e}$ and
$\mathbf{e}^{\prime}$, denoted by $\langle \cdot, \cdot \rangle$,
is given by
\[
\langle (b_n), (a_n) \rangle = \sum_{n=0}^{\infty}b_na_n
\]
for $(b_n) \in \mathbf{e}^{\prime}$ and $(a_n) \in \mathbf{e}$.

\noindent We observe that $\mathbf{e}^{\prime}$ is an algebra with
the pointwise operations and $\mathbf{e}$ is an ideal.

\begin{definition}\label{defeal} Let
\[
G_{\mathbf{e}}=\{(\varphi_{m})\in  \mathcal{G}^{\mathbb{N}_{0}} \
: \ \forall \  p\in \mathbb{N}_{0} \ (\| \varphi_{m} \|_{p})\in
\mathbf{e} \ \},
\]
and
\[
G_{\mathbf{e}^{\prime}}=\{(\varphi_{m})\in
\mathcal{G}^{\mathbb{N}_{0}} \ : \ \forall \ p\in \mathbb{N}_{0} \
(\| \varphi_{m} \|_{p})\in \mathbf{e}^{\prime}  \ \}.
\]
\end{definition}

\begin{lemma}\label{lema} $G_{\mathbf{e}^{\prime}}$ is a subalgebra
of $\mathcal{G}^{\mathbb{N}_0}$ and $G_{\mathbf{e}}$ is an ideal of
$G_{\mathbf{e}^{\prime}}$.
\end{lemma}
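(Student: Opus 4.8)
The plan is to verify directly that $G_{\mathbf{e}^\prime}$ is closed under the three algebra operations inherited from $\mathcal{G}^{\mathbb{N}_0}$, and that $G_{\mathbf{e}}$ absorbs multiplication by elements of $G_{\mathbf{e}^\prime}$. The essential tool is the continuity estimate \eqref{eq1}: for each $m\in\mathbb{N}_0$ there exist $r,s\in\mathbb{N}_0$ and $C_m>0$ with $\|\varphi\psi\|_m\leq C_m\|\varphi\|_r\|\psi\|_s$ for all $\varphi,\psi\in\mathcal{G}$. Combined with the fact (already noted in the excerpt) that $\mathbf{e}^\prime$ is an algebra under pointwise operations and $\mathbf{e}$ is an ideal in it, the whole statement reduces to transporting these sequence-space facts through the family of norms $\|\cdot\|_p$.

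First I would check closure of $G_{\mathbf{e}^\prime}$ under addition and scalar multiplication: given $(\varphi_m),(\psi_m)\in G_{\mathbf{e}^\prime}$, for each $p$ the triangle inequality gives $\|\varphi_m+\psi_m\|_p\leq\|\varphi_m\|_p+\|\psi_m\|_p$, and the right-hand side is a sum of two sequences in $\mathbf{e}^\prime$, hence in $\mathbf{e}^\prime$; similarly $(\|a\varphi_m\|_p)=|a|(\|\varphi_m\|_p)\in\mathbf{e}^\prime$. For the product, fix $p$; by \eqref{eq1} pick $r,s,C_p$ so that $\|\varphi_m\psi_m\|_p\leq C_p\|\varphi_m\|_r\|\psi_m\|_s$ for every $m$. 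Since $(\|\varphi_m\|_r)\in\mathbf{e}^\prime$ and $(\|\psi_m\|_s)\in\mathbf{e}^\prime$, their pointwise product lies in $\mathbf{e}^\prime$ (the bound $|b_n|\leq Ce^{2nm}$ is stable under products, with exponents and constants added), and scaling by $C_p$ preserves this; by the domination description of $\mathbf{e}^\prime$ it follows that $(\|\varphi_m\psi_m\|_p)\in\mathbf{e}^\prime$. As $p$ was arbitrary, $(\varphi_m\psi_m)\in G_{\mathbf{e}^\prime}$, so $G_{\mathbf{e}^\prime}$ is a subalgebra.

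For the ideal property, let $(\varphi_m)\in G_{\mathbf{e}}$ and $(\psi_m)\in G_{\mathbf{e}^\prime}$. Again fix $p$ and choose $r,s,C_p$ from \eqref{eq1}, so $\|\varphi_m\psi_m\|_p\leq C_p\|\varphi_m\|_r\|\psi_m\|_s$. Now $(\|\varphi_m\|_r)\in\mathbf{e}$ and $(\|\psi_m\|_s)\in\mathbf{e}^\prime$; since $\mathbf{e}$ is an ideal of $\mathbf{e}^\prime$, the pointwise product lies in $\mathbf{e}$, and multiplying by the constant $C_p$ keeps it in $\mathbf{e}$. Here one uses the $\sup$-norm description $|(a_n)|_{m,\infty}=\sup_n e^{nm}|a_n|$ of $\mathbf{e}$ together with the domination bound for $\mathbf{e}^\prime$: if $e^{nm'}|a_n|\to 0$ for all $m'$ and $|b_n|\leq Ce^{2nk}$, then $e^{nm}|a_nb_n|\leq Ce^{n(m+2k)}|a_n|\to 0$ for all $m$. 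Hence $(\|\varphi_m\psi_m\|_p)\in\mathbf{e}$ for all $p$, i.e. $(\varphi_m\psi_m)\in G_{\mathbf{e}}$. (That $G_{\mathbf{e}}$ is itself closed under addition and scalar multiplication is the same triangle-inequality argument as above, using that $\mathbf{e}$ is a vector space.)

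I do not anticipate a genuine obstacle; the only point requiring care is bookkeeping of the norm indices. The indices $r,s$ produced by \eqref{eq1} depend on the target index $p$ but \emph{not} on $m$, which is exactly what is needed so that a single pair of source sequences controls the whole product sequence uniformly in $m$; I would state this dependence explicitly. A secondary subtlety is that the $\ell^2$ membership built into the definitions of $\mathbf{e}$ and $\mathbf{e}^\prime$ is automatically inherited — any sequence dominated by $Ce^{2nm}$ that also tends to zero after multiplication by $e^{nm'}$ is square-summable, and more simply, products and sums of $\mathcal{G}$-valued sequences are again $\mathcal{G}$-valued, so no separate verification of the $\ell^2$ condition is needed beyond the growth/decay bounds. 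With these remarks the proof is a direct transfer of the algebra-and-ideal structure of $(\mathbf{e}^\prime,\mathbf{e})$ through the estimate \eqref{eq1}.
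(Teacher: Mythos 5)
Your proof is correct and follows essentially the same route as the paper: both arguments fix the target index $p$, invoke the continuity estimate \eqref{eq1} with indices $r,s$ depending only on $p$, and then push the algebra/ideal structure of $(\mathbf{e}^{\prime},\mathbf{e})$ through the resulting bound $\|\varphi_m\psi_m\|_p\leq C_p\|\varphi_m\|_r\|\psi_m\|_s$. The only cosmetic difference is that you verify membership in $\mathbf{e}$ via the sup-norm description $|\cdot|_{m,\infty}$ while the paper uses the equivalent $\ell^2$-type norms $|\cdot|_l$.
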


\begin{proof}
Let $(\varphi_n),(\psi_n) \in G_{\mathbf{e}^{\prime}}$ and $m \in
\mathbb{N}_0$. From (\ref{eq1}) it follows that there exists $r, s
\in \mathbb{N}_0$ and a constant $C_m>0$ such that
\[
\| \varphi_n \psi_n\|_m  \leq  C_m \| \varphi_{n} \|_r\| \psi_n
\|_s.
\]
By definition, there exists constants $D, E>0$ and $p, q\in
\mathbb{N}_0$ such that
\begin{eqnarray*}
\| \varphi_n \|_r & \leq & De^{np} \\
\| \psi_n \|_s & \leq & Ee^{nq}.
\end{eqnarray*}
Combining these inequalities, we obtain
\[
\| \varphi_n \psi_n\|_m  \leq C_mDE e^{n(p+q)}.
\]
This proves that $(\| \varphi_n \psi_n\|_m ) \in
\mathbf{e}^{\prime}$, thus $(\varphi_n)\cdot(\psi_n) \in
G_{\mathbf{e}^{\prime}}$.

\noindent It remains to prove that $G_{\mathbf{e}}$ is an ideal of
$G_{\mathbf{e}^{\prime}}$. Let $(\varphi_n) \in
G_{\mathbf{e}^{\prime}}$, $(\psi_n) \in G_{\mathbf{e}}$ and $m \in
\mathbb{N}_0$. By  definitions, we have that for each $r \in
\mathbb{N}_0$ there exists a constant $D>0$ and $p\in
\mathbb{N}_0$ such that
\[
\|\varphi_n \|_r  \leq  De^{np}
\]
and for all $s,l \in \mathbb{N}_0$,
\[
|(\| \psi_n\|_s)|^2_l =\sum_{n=0}^{\infty} e^{2lp}\| \psi_n\|^2_s
< \infty.
\]

\noindent Combining the inequality (\ref{eq1}) with the above
equations we obtain
\begin{eqnarray*}
|(\| \varphi_n \psi_n\|_m)|^2_l & = & \sum_{n=0}^{\infty}
e^{2nl}\| \varphi_n
\psi_n\|^2_m \\
& \leq &  C^2_m \sum_{n=0}^{\infty}e^{2nl} \| \varphi_{n} \|^2_r\| \psi_n \|^2_s \\
 & \leq & C^2_mD^2\sum_{n=0}^{\infty} e^{2n(l+p)} \| \psi_n \|^2_s \\
 & < & \infty.
\end{eqnarray*}
We have proved that $(\| \varphi_n \psi_n\|_m) \in \mathbf{e}$,
for all $m \in \mathbb{N}_0$, this is $(\varphi_n)\cdot(\psi_n)
\in G_{\mathbf{e}}$.
\end{proof}

\begin{proposition}\label{includ} Let $ F\in \mathcal{G}^{\ast}$. Then
$(F_{m})\in  G_{\mathbf{e}^{\prime}}$, where $F_{m}=\Pi_{m}F$.
\end{proposition}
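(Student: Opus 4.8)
The plan is to show that the sequence $(\Pi_m F)$ lies in $G_{\mathbf{e}^{\prime}}$, i.e. that for every fixed $p \in \mathbb{N}_0$ the sequence of norms $(\|\Pi_m F\|_p)_{m}$ belongs to $\mathbf{e}^{\prime}$. Recalling the description of $\mathbf{e}^{\prime}$, this means I must produce a constant $C > 0$ and an index $k \in \mathbb{N}_0$ (both allowed to depend on $p$ and on $F$) such that
\[
\|\Pi_m F\|_p \leq C\, e^{2mk} \qquad \text{for all } m \in \mathbb{N}_0.
\]
First I would fix $p$. Since $F \in \mathcal{G}^{\ast} = \bigcup_{q \geq 0} \mathcal{G}_{-q}$, there is some $q \in \mathbb{N}_0$ with $F \in \mathcal{G}_{-q}$, so that $\|F\|_{-q}^2 = \sum_{n=0}^{\infty} n!\, e^{-2qn}\, |F_n|_0^2 < \infty$. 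This is the only input about $F$ I will need.

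Next I compute $\|\Pi_m F\|_p^2$ directly from the definition of the $\mathcal{G}_p$-norm. Since $\Pi_m F = \sum_{n=0}^{m} I_n(F_n)$, we have
\[
\|\Pi_m F\|_p^2 = \sum_{n=0}^{m} n!\, e^{2pn}\, |F_n|_0^2
= \sum_{n=0}^{m} \bigl(n!\, e^{-2qn}\, |F_n|_0^2\bigr)\, e^{2(p+q)n}.
\]
Now I bound the factor $e^{2(p+q)n}$ crudely by $e^{2(p+q)m}$, valid because $n \leq m$ in the sum, and pull it out:
\[
\|\Pi_m F\|_p^2 \leq e^{2(p+q)m} \sum_{n=0}^{m} n!\, e^{-2qn}\, |F_n|_0^2 \leq e^{2(p+q)m}\, \|F\|_{-q}^2.
\]
Taking square roots gives $\|\Pi_m F\|_p \leq \|F\|_{-q}\, e^{(p+q)m}$, which is certainly of the form $C\, e^{2mk}$ with $C = \|F\|_{-q}$ and, say, $k = p+q$ (one may absorb the factor $e^{(p+q)m} \le e^{2(p+q)m}$). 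Since this holds for every $p$, the sequence $(\|\Pi_m F\|_p)_m$ lies in $\mathbf{e}^{\prime}$ for each $p$, and therefore $(\Pi_m F) \in G_{\mathbf{e}^{\prime}}$.

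There is essentially no obstacle here: the argument is just the observation that truncating the chaos expansion and measuring it in a \emph{stronger} norm costs at most an exponential factor in the truncation level $m$, which is exactly the growth the space $\mathbf{e}^{\prime}$ permits. The only point requiring a moment's care is bookkeeping with the two indices $p$ (the target norm, quantified universally) and $q$ (fixed by membership of $F$ in some $\mathcal{G}_{-q}$), and the trivial but crucial estimate $e^{2(p+q)n} \leq e^{2(p+q)m}$ for $0 \le n \le m$, which uses that $p + q \geq 0$. No use of bicontinuity of the product or of Proposition~\ref{app1} is needed; the result is purely a statement about the norms of the truncations.
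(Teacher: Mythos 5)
Your argument is correct and is essentially identical to the paper's proof: both pick $q$ with $\|F\|_{-q}<\infty$ and bound $\|\Pi_m F\|_p^2=\sum_{n=0}^m n!\,e^{2pn}|F_n|_0^2\leq e^{2(p+q)m}\|F\|_{-q}^2$ by pulling out the exponential factor over $n\le m$. The only addition is your (correct) remark reconciling the exponent with the paper's convention $|b_n|\le Ce^{2nm}$ in the definition of $\mathbf{e}^{\prime}$.
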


\begin{proof}
\noindent Since  $F=\sum_{n =0}^{\infty}I_n(F_n)\in
\mathcal{G}^{\ast}$, there exists $q \in \mathbb{N}_0$ such that
\[
 \| F\|_{-q}^{2} =\sum_{n=0}^{\infty} n!  \ e^{-2qn} \ | F_{n}|_{0}^{2} <
 \infty.
\]
Then
\begin{eqnarray*}
\|F_{m}\|_{p}^{2} & = & \sum_{n=0}^{m}    n!   \ e^{2pn} \ |
F_{n}|_{0}^{2} \\
& \leq & e^{(2p+2q)m} \  \| F\|_{-q}^{2}
\end{eqnarray*}
for all $p \in \mathbb{N}_0$. This proves that $(F_{m}) \in
G_{\mathbf{e}^{\prime}}$.
\end{proof}

\begin{definition}
The algebra of stochastic generalized functions is defined as
\[
\mathbf{G}=   G_{\mathbf{e}^{\prime}} /  G_{\mathbf{e}}.
\]
The elements of $\mathbf{G}$ are called stochastic generalized
functions.
\end{definition}

\noindent Let $(\varphi_m) \in G_{\mathbf{e}^{\prime}}$ we will
denote by $[\varphi_m]$ the equivalent class $(\varphi_m) +
G_{\mathbf{e}}$.

\begin{proposition}\label{includ2}
Let $\iota:\mathcal{G}^{\ast}\rightarrow \mathbf{G}$ be the
application
\[
\iota(F)=[F_m].
\]
Then $\iota$ is a linear embedding. Moreover,we have that  for all
$\varphi \in \mathcal{G}$,

\[
\iota(\varphi)=[\varphi].
\]
\end{proposition}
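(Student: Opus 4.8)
The plan is to verify the three assertions in turn: that $\iota$ is well defined, that it is linear, and that it is injective; and then to check the compatibility condition $\iota(\varphi)=[\varphi]$ for $\varphi\in\mathcal{G}$. That $\iota$ is well defined is exactly Proposition~\ref{includ}: for $F\in\mathcal{G}^{\ast}$ the sequence $(\Pi_m F)$ lies in $G_{\mathbf{e}^{\prime}}$, so $[F_m]\in\mathbf{G}$ makes sense. Linearity is immediate from the linearity of the projections $\Pi_m$ and the fact that the quotient map $G_{\mathbf{e}^{\prime}}\to\mathbf{G}$ is a linear map; so the content is entirely in injectivity.

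For injectivity, suppose $\iota(F)=0$, i.e.\ $(\Pi_m F)\in G_{\mathbf{e}}$. I would write $F=\sum_{n=0}^{\infty}I_n(F_n)$ and fix $p\in\mathbb{N}_0$. By Proposition~\ref{app1}(b) we have $\Pi_m F\to F$ in the weak sense of $\mathcal{G}^{\ast}$; on the other hand, $(\Pi_m F)\in G_{\mathbf{e}}$ means $(\|\Pi_m F\|_p)\in\mathbf{e}$, hence in particular $\|\Pi_m F\|_p\to 0$ as $m\to\infty$ (since elements of $\mathbf{e}$ are null sequences — indeed $e^{nm}a_n\to0$ with $m=0$ already gives $a_n\to0$). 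But $\|\Pi_m F\|_p^2=\sum_{n=0}^{m}n!\,e^{2pn}|F_n|_0^2$ is nondecreasing in $m$, so if it tends to $0$ it must be identically zero; therefore $F_n=0$ for all $n$, i.e.\ $F=0$. Thus $\ker\iota=\{0\}$ and $\iota$ is injective. (Alternatively, and perhaps more cleanly: if $\|\Pi_m F\|_p\to0$ for the single value $p=0$ together with $\|\Pi_m F\|_0$ being nondecreasing, one concludes $\|\Pi_1 F\|_0=0$, hence $F_0=F_1=0$, and then iterating — or just using that the nondecreasing limit is the supremum — gives $\|F\|_0=0$.)

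For the compatibility statement, let $\varphi\in\mathcal{G}$. Here $[\varphi]$ denotes the class of the constant sequence $(\varphi,\varphi,\dots)$, which lies in $G_{\mathbf{e}^{\prime}}$ (indeed in $G_{\mathbf{e}}$? no — a nonzero constant sequence is not in $\mathbf{e}$, but $\|\varphi\|_p$ is a constant, hence bounded, hence in $\mathbf{e}^{\prime}$, so the constant sequence is in $G_{\mathbf{e}^{\prime}}$). Viewing $\varphi\in\mathcal{G}\subset\mathcal{G}^{\ast}$, we have $\iota(\varphi)=[\Pi_m\varphi]$, so it suffices to show $(\Pi_m\varphi-\varphi)\in G_{\mathbf{e}}$, i.e.\ that for every $p\in\mathbb{N}_0$ the sequence $(\|\Pi_m\varphi-\varphi\|_p)_m$ lies in $\mathbf{e}$. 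Now $\|\Pi_m\varphi-\varphi\|_p^2=\sum_{n=m+1}^{\infty}n!\,e^{2pn}|\varphi_n|_0^2$ is the tail of the convergent series defining $\|\varphi\|_p^2$ (convergent because $\varphi\in\mathcal{G}\subset\mathcal{G}_p$). The point to check is that this tail decays faster than any $e^{-2km}$: fix $k\in\mathbb{N}_0$; since $\varphi\in\mathcal{G}_{p+k}$ we have $\sum_{n}n!\,e^{2(p+k)n}|\varphi_n|_0^2<\infty$, and therefore
\[
e^{2km}\|\Pi_m\varphi-\varphi\|_p^2
=e^{2km}\sum_{n=m+1}^{\infty}n!\,e^{2pn}|\varphi_n|_0^2
\le\sum_{n=m+1}^{\infty}n!\,e^{2(p+k)n}|\varphi_n|_0^2\xrightarrow[m\to\infty]{}0,
\]
using $e^{2km}\le e^{2kn}$ for $n\ge m+1$ together with the fact that the right-hand side is the tail of a convergent series. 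Hence $(\|\Pi_m\varphi-\varphi\|_p)\in\mathbf{e}$ for every $p$, so $[\Pi_m\varphi]=[\varphi]$ in $\mathbf{G}$, which is $\iota(\varphi)=[\varphi]$.

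The main obstacle, such as it is, is purely bookkeeping: one must be careful that the ``smoothing by chaos truncation'' error $\Pi_m\varphi-\varphi$ decays in the strong sense required by $\mathbf{e}$ (faster than every exponential), which is precisely where the projective-limit structure of $\mathcal{G}=\bigcap_p\mathcal{G}_p$ is used — gaining an extra index $k$ in the norm lets one absorb the factor $e^{2km}$. Everything else (well-definedness, linearity, the monotonicity argument for injectivity) is routine given Propositions~\ref{app1} and~\ref{includ}.
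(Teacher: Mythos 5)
Your proof is correct and follows essentially the same route as the paper: well-definedness via Proposition~\ref{includ}, and the compatibility $\iota(\varphi)=[\varphi]$ by bounding $e^{2km}\|\varphi-\Pi_m\varphi\|_p^2$ by the tail of the convergent series for $\|\varphi\|_{p+k}^2$, exactly as in the text. The only (minor) divergence is in the injectivity step: the paper deduces $F=0$ by combining $\|\Pi_m F\|_p\to 0$ with the weak convergence $\Pi_m F\to F$ from Proposition~\ref{app1}(b), whereas you argue directly that $m\mapsto\|\Pi_m F\|_p^2=\sum_{n=0}^m n!\,e^{2pn}|F_n|_0^2$ is nondecreasing, so a zero limit forces every $F_n=0$; your version is a bit more elementary and self-contained, but the two arguments are interchangeable.
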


\begin{proof}
It is clear from the above Proposition, that $\iota$ is a well
defined linear application. Let $F \in \mathcal{G}^{\ast}$ such
that $\iota(F)=0$. As $(F_m) \in G_{\mathbf{e}}$ we have $\lim_{m
\rightarrow \infty}\|F_{m}\|_{p}=0$, for all $p \in \mathbb{N}_0$.
This is the sequence $(F_{m})$ converge weakly to $0$, it follows
that $F=0$.

\noindent It remains to prove that $ \iota(\varphi)=[\varphi]$,
for all $\varphi \in \mathcal{G}$. Let $\varphi=\sum_{n
=0}^{\infty}I_n(f_n)\in \mathcal{G}$ and $p \in \mathbb{N}_0$.
Then
\begin{eqnarray*}
e^{2am}\|\varphi-\Pi_{m}\varphi\|_{p}^{2} & = &
e^{2am}\sum_{n=m+1}^{\infty}
n! \ e^{2pn} \ | f_{n}|_{0}^{2} \\
& \leq &  \sum_{n=m+1}^{\infty} n! \ e^{2n(p+a)} \ |
f_{n}|_{0}^{2},
\end{eqnarray*}
for all $a \in \mathbb{N}_0$.

\noindent Since $\varphi \in \mathcal{G}$, we have
\[
\sum_{n=m+1}^{\infty} n! \ e^{2n(p+a)} \ | f_{n}|_{0}^{2} <
\infty.
\]
Combining these facts, we have that
\[
\lim_{m \rightarrow
\infty}e^{2am}\|\varphi-\Pi_{m}\varphi\|_{p}^{2} = 0
\]
for all $a \in \mathbb{N}_0$. This shows that
$(\varphi-\Pi_{m}\varphi) \in G_{\mathbf{e}}$, which completes the
proof.
\end{proof}

\begin{corollary} Let $\varphi,\psi\in \mathcal{G}$. Then
\[
\iota(\varphi\psi)=\iota(\varphi) \cdot \iota(\psi).
\]
\end{corollary}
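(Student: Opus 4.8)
The plan is to use the already-established facts that $\iota(\varphi)=[\varphi]$ and $\iota(\psi)=[\psi]$ for test functions (Proposition \ref{includ2}), together with the definition of the product in $\mathbf{G}$ as the quotient product inherited from $G_{\mathbf{e}^{\prime}}$. Since $\iota(\varphi)=[\varphi]$ means that the sequence $(\Pi_m\varphi - \varphi)$ lies in the ideal $G_{\mathbf{e}}$, and likewise for $\psi$, the product $\iota(\varphi)\cdot\iota(\psi)$ is represented by the constant sequence $(\varphi\psi)$; on the other hand $\iota(\varphi\psi)=[\varphi\psi]$ by the same proposition. So the two sides agree as elements of $\mathbf{G}$.

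More explicitly, first I would note that $\varphi,\psi \in \mathcal{G}$ implies $\varphi\psi \in \mathcal{G}$ by the continuity estimate (\ref{eq1}), so $\iota(\varphi\psi)$ makes sense and equals $[\varphi\psi]$ by Proposition \ref{includ2}. Next, again by Proposition \ref{includ2}, $\iota(\varphi)=[\varphi]$ and $\iota(\psi)=[\psi]$, where $[\varphi]$ denotes the class of the constant sequence $(\varphi)_{m}$. Then I would invoke the definition of multiplication in $\mathbf{G}=G_{\mathbf{e}^{\prime}}/G_{\mathbf{e}}$: the product of the classes of $(\varphi)_m$ and $(\psi)_m$ is the class of the pointwise product $(\varphi\psi)_m$, i.e. $[\varphi]\cdot[\psi]=[\varphi\psi]$. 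Combining these three identities gives $\iota(\varphi\psi)=[\varphi\psi]=[\varphi]\cdot[\psi]=\iota(\varphi)\cdot\iota(\psi)$.

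The only point requiring a small verification is that the constant sequences $(\varphi)_m$, $(\psi)_m$, and $(\varphi\psi)_m$ indeed belong to $G_{\mathbf{e}^{\prime}}$, so that their classes and the quotient product are legitimate; but a constant sequence $(\varphi)_m$ has $\|\varphi\|_p$ constant in $m$, hence trivially in $\mathbf{e}^{\prime}$ (take $C=\|\varphi\|_p$, $m=0$ in the defining condition), so this is immediate. There is no real obstacle here: the statement is essentially a formal consequence of Proposition \ref{includ2} and the fact that $\iota$ restricted to $\mathcal{G}$ sends $\varphi$ to the class of the constant sequence, which is an algebra homomorphism onto constant-sequence classes because the quotient product is defined componentwise. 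If one wanted to be maximally careful, one could also argue directly that $\iota(\varphi)\cdot\iota(\psi)=[\Pi_m\varphi]\cdot[\Pi_m\psi]=[(\Pi_m\varphi)(\Pi_m\psi)]$ and then show $(\Pi_m\varphi)(\Pi_m\psi) - \Pi_m(\varphi\psi) \in G_{\mathbf{e}}$ using bicontinuity of the product on $\mathcal{G}$ and Proposition \ref{app1}, but the shorter route via $\iota(\varphi)=[\varphi]$ avoids this computation entirely.
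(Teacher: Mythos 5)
Your proof is correct and takes essentially the same route as the paper's: both arguments come down to Proposition \ref{includ2} (which identifies $\iota(\varphi)$ with the class of the constant sequence) together with the ideal property of $G_{\mathbf{e}}$ from Lemma \ref{lema}, the paper merely writing out the decomposition of $(\varphi\psi)-(\Pi_m\varphi)(\Pi_m\psi)$ explicitly where you instead invoke the well-definedness of the quotient product. No gaps.
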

\begin{proof}
We first observe that
\[
(\varphi\psi)-(\varphi_{m})\cdot(\psi_{m})=(\varphi)\cdot(\psi-\psi_{m})+(\varphi-\varphi_{m})\cdot(\psi).
\]
Applying Proposition \ref{includ2} and Lemma \ref{lema} we obtain
$(\varphi\psi)-(\varphi_{m})\cdot(\psi_{m}) \in G_{\mathbf{e}}$.
Therefore $\iota(\varphi\psi)=\iota(\varphi) \cdot \iota(\psi)$.
\end{proof}

\noindent Now, we give some examples of stochastic generalized
functions.
\begin{example} The white noise generalized function is defined by

$$
W_{x}=[W_{m}(x)]=  [I_{1} ( \sum_{j=0}^{m} \eta_{j}(x)
\eta_{j}(\cdot))].
$$

\noindent For $d=1$ we have the following identity

\begin{equation}\label{dar0}
 \sum_{j=0}^{n} \eta_j(t)
 \eta_j(x)=\frac{\sqrt{n+1}}{x-t}\Big(\eta_{n+1}(x)\eta_{n}(t)-\eta_{n+1}(t)\eta_{n}(x)\Big).
\end{equation}

Thus
\[
W_{x}= [\int
\frac{\sqrt{n+1}}{x-t}\Big(\eta_{n+1}(x)\eta_{n}(t)-\eta_{n+1}(t)\eta_{n}(x)\Big)
\ dB(t)].
\]

\end{example}

\begin{example} The element $W_{x}^{2}$. From the above example we have

\[
W_{x}^{2}=[W_{m}(x)^{2}]= [I_{1} ( \sum_{j=0}^{m} \eta_{j}(x)
\eta_{j}(\cdot)) \  I_{1} ( \sum_{j=0}^{m} \eta_{j}(x)
\eta_{j}(\cdot))].
\]

\noindent From the formula (\ref{ten2}) we get
\[
W_{x}^{2}=[W_{m}(x)\diamond W_{m}(x) + \ \sum_{j=0}^{m}
\eta_{j}^{2}(x)].
\]
\end{example}
\begin{example} Let $x\in \mathbb{R}$ and $t>0$, the
Donsker delta is the stochastic generalized function defined by
\[
\iota(\delta_{x}(B_{t}))=[\sum_{n=0}^{m} \frac{1}{n!} t^{-n}
H_{n,t}(x) H_{n,t}(B_{t})],
\]
where $H_{n,t}$ is the n-th Hermite polynomial with variance $t$
(see for instance \cite{Hida}).
\end{example}

\begin{definition}\label{ass1} Let $[\varphi_m]$ and $[\psi_m]$ be stochastic generalized functions. We say
that $[\varphi_m]$ and $[\psi_m]$ are associated, denoted by $
[\varphi_m] \approx [\psi_m]$, if for all $\varphi \in
\mathcal{G}$ we have that
\[
\lim_{m\rightarrow\infty}\ll \varphi_m-\psi_m,\varphi\gg=0.
\]
\end{definition}

\noindent We observe that the relation $\approx$ is well defined
as an equivalence relation on $\mathbf{G}$.

\begin{lemma}\label{equias}
Let $F$ and $G$ be in $\mathcal{G}^{\ast}$. Then:
\begin{enumerate}
\item If $\iota(F)\approx \iota(G)$ then $F=G$.

\item If there exists $F \bullet G$ then $\iota(F) \cdot \iota(G)
\approx \iota(F \bullet G)$.

\item If $\psi\in \mathcal{G}$ and $F\in \mathcal{G}^{\ast}$ then
$\iota(\psi)\iota(F)\approx \psi F$.
\end{enumerate}
\end{lemma}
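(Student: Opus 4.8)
The statement bundles three facts about how the embedding $\iota\colon\mathcal{G}^{\ast}\to\mathbf{G}$ interacts with the association relation $\approx$; each part is essentially a weak-convergence statement, and the strategy throughout is to unwind $\iota$ into the projection sequences $(\Pi_m F)$ and test against an arbitrary $\varphi\in\mathcal{G}$.

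For part~(1): if $\iota(F)\approx\iota(G)$, then by definition of $\iota$ and of $\approx$ we have $\lim_{m\to\infty}\ll \Pi_m F-\Pi_m G,\varphi\gg=0$ for every $\varphi\in\mathcal{G}$. By Proposition~\ref{app1}(b), $\Pi_mF\to F$ and $\Pi_mG\to G$ in $\mathcal{G}^{\ast}$, hence in particular $\ll\Pi_mF,\varphi\gg\to\ll F,\varphi\gg$ and likewise for $G$. Combining, $\ll F-G,\varphi\gg=0$ for all $\varphi\in\mathcal{G}$; since $\mathcal{G}$ is dense in $\mathcal{G}^{\ast}$ (indeed $\mathcal{G}$ separates points of $\mathcal{G}^{\ast}$ by the Gelfand triple structure), this forces $F=G$.

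For part~(2): here $\iota(F)\cdot\iota(G)=[(\Pi_mF)(\Pi_mG)]$ by the definition of the product in $\mathbf{G}$, while $\iota(F\bullet G)=[\Pi_m(F\bullet G)]$. By Proposition~\ref{app1}(b) the second sequence converges weakly to $F\bullet G$. The point is that, by the very \emph{definition} of $F\bullet G$ in~(\ref{3dh}), $(\Pi_mF)(\Pi_mG)$ also converges weakly to $F\bullet G$ (this is exactly the hypothesis ``there exists $F\bullet G$''). Subtracting the two sequences, $\ll(\Pi_mF)(\Pi_mG)-\Pi_m(F\bullet G),\varphi\gg\to\ll F\bullet G,\varphi\gg-\ll F\bullet G,\varphi\gg=0$ for every $\varphi\in\mathcal{G}$, which is precisely $\iota(F)\cdot\iota(G)\approx\iota(F\bullet G)$.

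For part~(3): we must compare $[(\psi_m)(\Pi_mF)]$ (using the constant sequence $\psi_m=\psi$, since $\iota(\psi)=[\psi]$ by Proposition~\ref{includ2}) with $\psi F\in\mathcal{G}^{\ast}$, embedded as $[\Pi_m(\psi F)]$. Again $\Pi_m(\psi F)\to\psi F$ weakly. For the first sequence, test against $\varphi\in\mathcal{G}$: $\ll\psi\,\Pi_mF,\varphi\gg=\ll\Pi_mF,\psi\varphi\gg$ by Definition~1, and since $\psi\varphi\in\mathcal{G}$ (by inequality~(\ref{eq1})) and $\Pi_mF\to F$ in $\mathcal{G}^{\ast}$, this tends to $\ll F,\psi\varphi\gg=\ll\psi F,\varphi\gg$. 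So both sequences test-converge to $\ll\psi F,\varphi\gg$, and their difference goes to zero; hence $\iota(\psi)\iota(F)\approx\psi F$.

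The only genuinely delicate point is making sure the weak convergences $\ll\Pi_mF,\psi\varphi\gg\to\ll F,\psi\varphi\gg$ are legitimate — i.e. that $\Pi_m F\to F$ in the strong dual topology of $\mathcal{G}^{\ast}$ (Proposition~\ref{app1}(b) gives convergence in some $\mathcal{G}_{-q}$-norm), which does imply convergence of the pairing against any fixed element of $\mathcal{G}$. Everything else is bookkeeping: rewrite each $\iota$ as a projection sequence, invoke Proposition~\ref{app1}, and use bilinearity of $\ll\cdot,\cdot\gg$ together with the product estimate~(\ref{eq1}) to keep the test functions inside $\mathcal{G}$.
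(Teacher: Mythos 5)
Your proof is correct and follows the same route as the paper: part (1) is proved there exactly as you do (combine Proposition~\ref{app1} with the definition of $\approx$ to get $\ll F-G,\varphi\gg=\lim_m\ll \Pi_mF-\Pi_mG,\varphi\gg=0$ for all $\varphi\in\mathcal{G}$), while parts (2) and (3) are declared straightforward, and your unwinding of the definitions --- both representative sequences converge weakly to the same limit, so their difference tests to zero --- is precisely the intended argument. The only cosmetic point is that in (1) the conclusion $F=G$ follows simply because $F-G$ is the zero functional on $\mathcal{G}$ (non-degeneracy of the dual pairing), not from any ``density of $\mathcal{G}$ in $\mathcal{G}^{\ast}$''; your parenthetical already records the correct reason.
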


\begin{proof}

$(1)$ Combining Proposition \ref{app1} and $\iota(F)\approx
\iota(G)$ we conclude that,
\[
\ll F-G,\varphi\gg = \lim_{m\rightarrow\infty} \ll (\Pi_mF-\Pi_mG)
\ \varphi \gg=0
\]
for all $\varphi\in \mathcal{G}$. Thus $F=G$.

\noindent The proofs of $(2)$ and $(3)$ are straightforward.
\end{proof}

\begin{remark}
We observe that our construction of stochastic generalized
functions also can be done for others space of test functions and
distributions spaces, for instance Hida and Kondratiev spaces.
\end{remark}

\noindent We introduce the generalized sequences ring $\mathbf{g}$
with the purpose of to define the expectation of generalized
functions of $\mathbf{G}$.

\begin{definition}\label{genseq} The generalized sequences ring,
denoted by $\mathbf{g}$, is define to be
\begin{equation}
\mathbf{g}=\mathbf{e}^{\prime}/ \mathbf{e}.
\end{equation}

\noindent Let $(b_n) \in \mathbf{e}^{\prime}$ we will use $[b_n]$
by denoted the equivalent class $(b_n) + \mathbf{e}$.
\end{definition}
It is clear from the definition that $\mathbf{e}^{\prime}$ is a ring
but is not a field. In fact, $\mathbf{e}^{\prime}$ has zero
divisors.
\begin{lemma} $\mathbf{a})$ Let $\iota_0:\mathbb{R}\rightarrow \mathbf{g}$ be the application
\[
\iota_0(a)=[a].
\]
Then $\iota_0$ is an embedding.

$\mathbf{b})$ $\mathbf{G}$ is a $\mathbf{g}$-module with the
natural operations.
\end{lemma}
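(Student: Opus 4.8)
The plan is to verify the two claims in turn, each being a routine unwinding of the definitions together with the algebraic facts established earlier. For part $\mathbf{a)}$, I would first check that $\iota_0$ is well defined: a constant real number $a$ gives the constant sequence $(a)_n = (a,a,\dots)$, which certainly lies in $\mathbf{e}^\prime$ since $|a| \le |a| e^{2nm}$ for every $n$ and any $m\ge 0$, so $\iota_0(a)=[(a)_n]$ makes sense as an element of $\mathbf{g}=\mathbf{e}^\prime/\mathbf{e}$. Linearity is immediate from the pointwise operations on $\mathbf{e}^\prime$ and the fact that $\mathbf{e}$ is a linear subspace. For injectivity, suppose $\iota_0(a)=0$, i.e. the constant sequence $(a)_n$ belongs to $\mathbf{e}$; then by definition of $\mathbf{e}$ we need $\lim_{n\to\infty} e^{nm} a = 0$ for all $m\in\mathbb{N}_0$, and taking $m=0$ forces $a=0$. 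Hence $\iota_0$ is an embedding. (One may also note $\iota_0$ is a ring homomorphism: $\iota_0(ab)=[ab]=[a]\cdot[b]$, since the product in $\mathbf{g}$ is induced by the pointwise product in $\mathbf{e}^\prime$.)

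For part $\mathbf{b)}$, the point is to define a scalar multiplication of $\mathbf{G}=G_{\mathbf{e}^\prime}/G_{\mathbf{e}}$ by $\mathbf{g}=\mathbf{e}^\prime/\mathbf{e}$ and check the module axioms. Given $(b_n)\in\mathbf{e}^\prime$ and $(\varphi_n)\in G_{\mathbf{e}^\prime}$, set $(b_n)\cdot(\varphi_n):=(b_n\varphi_n)$, the termwise scalar multiple in $\mathcal{G}$. I first must check this sequence lies in $G_{\mathbf{e}^\prime}$: for any $p\in\mathbb{N}_0$ we have $\|b_n\varphi_n\|_p = |b_n|\,\|\varphi_n\|_p$, and since $|b_n|\le C e^{2nm}$ for suitable $C,m$ while $\|\varphi_n\|_p \le D e^{np}$ for suitable $D,p$ (both by the defining growth conditions), the product is bounded by $CD\, e^{n(2m+p)}$, so $(\|b_n\varphi_n\|_p)\in\mathbf{e}^\prime$. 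This gives a well-defined map $\mathbf{e}^\prime\times G_{\mathbf{e}^\prime}\to G_{\mathbf{e}^\prime}$.

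Next I would check that this descends to the quotients, i.e. that the ideal/negligibility structure is respected. Three sub-checks: if $(b_n)\in\mathbf{e}$ and $(\varphi_n)\in G_{\mathbf{e}^\prime}$, then $(b_n\varphi_n)\in G_{\mathbf{e}}$; if $(b_n)\in\mathbf{e}^\prime$ and $(\varphi_n)\in G_{\mathbf{e}}$, then $(b_n\varphi_n)\in G_{\mathbf{e}}$. Both follow from the same estimate $\|b_n\varphi_n\|_p=|b_n|\,\|\varphi_n\|_p$ together with the definitions of $\mathbf{e}$ and $G_{\mathbf{e}}$: in the first case $\|b_n\varphi_n\|_p \le |b_n| D e^{np}$ and $|b_n|$ decays faster than any exponential, so $e^{nl}\|b_n\varphi_n\|_p\to 0$ for all $l$; in the second case $\sum_n e^{2nl}\|b_n\varphi_n\|_p^2 \le C^2\sum_n e^{2n(l+2m)}\|\varphi_n\|_p^2<\infty$ because $(\|\varphi_n\|_p)\in\mathbf{e}$. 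Hence $[b_n]\cdot[\varphi_m]:=[b_n\varphi_m]$ is well defined on $\mathbf{g}\times\mathbf{G}$. The module axioms — $([b_n]+[b_n'])[\varphi_m]=[b_n][\varphi_m]+[b_n'][\varphi_m]$, $[b_n]([\varphi_m]+[\psi_m])=[b_n][\varphi_m]+[b_n][\psi_m]$, $([b_n][b_n'])[\varphi_m]=[b_n]([b_n'][\varphi_m])$, and $[1][\varphi_m]=[\varphi_m]$ — are then inherited termwise from the corresponding identities in $\mathcal{G}$ (which is an algebra) and require no further work. I do not expect a genuine obstacle here; the only mild subtlety is keeping the two different growth/decay conditions ($\mathbf{e}$ versus $\mathbf{e}^\prime$, and $G_{\mathbf{e}}$ versus $G_{\mathbf{e}^\prime}$) straight when verifying the two absorption properties, exactly as in the proof of Lemma \ref{lema}.
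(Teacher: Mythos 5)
Your proof is correct and is exactly the routine verification the paper has in mind — its own proof of this lemma is just the sentence ``The proof is straightforward.'' Your write-up supplies the omitted details (well-definedness and injectivity of $\iota_0$ via the $m=0$ condition in $\mathbf{e}$, and the two absorption properties $\mathbf{e}\cdot G_{\mathbf{e}^{\prime}}\subseteq G_{\mathbf{e}}$ and $\mathbf{e}^{\prime}\cdot G_{\mathbf{e}}\subseteq G_{\mathbf{e}}$ needed for the module structure to descend to the quotients) correctly, using the same estimates as in Lemma \ref{lema}.
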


\begin{proof}
The proof is straightforward.
\end{proof}
\noindent We define the notion of association in $\mathbf{g}$.
\begin{definition}
Let $[a_n]$ and $[b_n]$ be in $\mathbf{g}$. We says that $[a_n]$
and $[b_n]$ are associated, denoted by $ [a_n] \approx [b_n]$, if
\[
\lim_{n\rightarrow\infty}(a_n-b_n)=0.
\]
\end{definition}
\noindent We observe that the relation $\approx$ is well defined
equivalence relation on $\mathbf{g}$.

\begin{proposition} Let $[\varphi_m]=[\psi_{m}] \in \mathbf{G}$. Then
$[\mathbb{E}(\varphi_m)]=[\mathbb{E}(\psi_m)] \in \mathbf{g}$.
\end{proposition}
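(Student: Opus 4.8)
The plan is to observe that, restricted to $\mathcal{G}$, the expectation $\chi\mapsto\mathbb{E}(\chi)$ is simply the projection onto the zeroth Wiener--It\^o chaos: if $\chi=\sum_{n=0}^{\infty}I_n(\chi_n)\in\mathcal{G}$ then $\mathbb{E}(\chi)=\chi_0$, and therefore $|\mathbb{E}(\chi)|^2=|\chi_0|_0^2\le\sum_{n=0}^{\infty}e^{2pn}n!\,|\chi_n|_0^2=\|\chi\|_p^2$ for every $p\ge 0$; in particular $|\mathbb{E}(\chi)|\le\|\chi\|_0$. With this single estimate the statement becomes a routine unwinding of the definitions of $\mathbf{e}$, $\mathbf{e}^{\prime}$, $G_{\mathbf{e}}$ and $G_{\mathbf{e}^{\prime}}$.

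First I would check that $[\mathbb{E}(\varphi_m)]$ and $[\mathbb{E}(\psi_m)]$ are legitimate elements of $\mathbf{g}=\mathbf{e}^{\prime}/\mathbf{e}$. Since $(\varphi_m)\in G_{\mathbf{e}^{\prime}}$, the sequence $(\|\varphi_m\|_0)$ lies in $\mathbf{e}^{\prime}$, so $\|\varphi_m\|_0\le Ce^{2mk}$ for some $C>0$ and $k\in\mathbb{N}_0$; by the estimate above $|\mathbb{E}(\varphi_m)|\le\|\varphi_m\|_0\le Ce^{2mk}$, whence $(\mathbb{E}(\varphi_m))\in\mathbf{e}^{\prime}$, and the same for $(\psi_m)$. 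Next, the hypothesis $[\varphi_m]=[\psi_m]$ means precisely that $(\varphi_m-\psi_m)\in G_{\mathbf{e}}$, so $(\|\varphi_m-\psi_m\|_0)\in\mathbf{e}$. Applying the estimate to $\chi_m:=\varphi_m-\psi_m\in\mathcal{G}$ gives $|\mathbb{E}(\varphi_m)-\mathbb{E}(\psi_m)|=|\mathbb{E}(\chi_m)|\le\|\varphi_m-\psi_m\|_0$ for every $m$. Since $\mathbf{e}$ is solid --- if $(a_n)\in\mathbf{e}$ and $|b_n|\le|a_n|$ for all $n$, then $(b_n)\in\ell^2$ and $e^{nm}|b_n|\le e^{nm}|a_n|\to 0$, so $(b_n)\in\mathbf{e}$ --- we conclude $(\mathbb{E}(\varphi_m)-\mathbb{E}(\psi_m))\in\mathbf{e}$, that is $[\mathbb{E}(\varphi_m)]=[\mathbb{E}(\psi_m)]$ in $\mathbf{g}$.

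I do not expect a real obstacle: the proposition is a well-definedness assertion. The only two points that require a moment's reflection are the identification of the expectation with the zeroth-chaos coefficient (which yields the norm bound $|\mathbb{E}(\cdot)|\le\|\cdot\|_0$ used throughout) and the solidity of the sequence space $\mathbf{e}$; both are immediate from the definitions recalled in Sections 2 and 4, so the argument is short.
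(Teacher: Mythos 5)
Your proposal is correct and follows essentially the same route as the paper: both arguments rest on the single estimate $|\mathbb{E}(\chi)|\le\|\chi\|_{p}$ (which the paper invokes as ``by definition of $\|\cdot\|_{p}$'' and you justify by identifying $\mathbb{E}(\chi)$ with the zeroth chaos coefficient $\chi_0$), and then transfer the $\mathbf{e}^{\prime}$-bound on $(\|\varphi_m\|_p)$ and the $\mathbf{e}$-decay of $(\|\varphi_m-\psi_m\|_p)$ directly to the sequences of expectations. Your explicit remarks on the zeroth-chaos identification and on the solidity of $\mathbf{e}$ merely make transparent two steps the paper leaves implicit.
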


\begin{proof}
Let $[\varphi_{m}] \in G$. By definition, $(\varphi_m)\in
G_{\mathbf{e}^{\prime}}$. Thus for all $p \in \mathbb{N}_0$ we
have that $(\|\varphi_m\|_p )\in \mathbf{e}^{\prime}$. It follows
that for all $p \in \mathbb{N}_0$ there exists given $C>0$ and $a
\in \mathbb{N}_0$ such that
\[
\|\varphi_m\|^2_p \leq C^2e^{2am}
\]
for all $m \in \mathbb{N}_0$.

\noindent By definition of $\| \cdot \|_p$, we have that $|
\mathbb{E}(\varphi_m)|^{2} \leq \|\varphi_m\|^2_p$. Thus
\[
| \mathbb{E}(\varphi_m)| \leq Ce^{am}
\]
for all $m \in \mathbb{N}_0$. This implies that
$[\mathbb{E}(\varphi_m)] \in \mathbf{g}$.

It remains to prove the independence of the representative. By
definition, $(\varphi_m-\psi_m)\in G_{\mathbf{e}}$. Thus for all
$p \in \mathbb{N}_0$ we have that $(\|\varphi_m-\psi_m\|_p )\in
\mathbf{e}$. It follows that given $a, p \in \mathbb{N}_0$ and
$\varepsilon
>0$ there exists $m_0 \in \mathbb{N}_0$ such that
\[
e^{2am}\|\varphi_m-\psi_m\|^2_p < \varepsilon^2
\]
for all $m \geq m_0$.

\noindent By the definition of $\| \cdot \|_p$, we have that $|
\mathbb{E}(\varphi_m)-\mathbb{E}(\psi_m)|^{2} \leq
\|\varphi_m-\psi_m\|^2_p$.

\noindent Thus
\[
e^{am}| \mathbb{E}(\varphi_m)-\mathbb{E}(\psi_m)| < \varepsilon
\]
for all $m \geq m_0$. We conclude that
$[\mathbb{E}(\varphi_m)]=[\mathbb{E}(\psi_m)]$.
\end{proof}

\begin{definition}\label{exp}
Let $F=[\varphi_{m}]$ be a generalized function in $\mathbf{G}$.
The expectation of $F$, denoted by $\mathbb{E}(F)$, is defined to
be the generalized number
\[
\mathbb{E}(F)=[\mathbb{E}(\varphi_{m}].
\]
\end{definition}

\begin{proposition} Let $f \in L^{1}(\mu)$ and $F=[\varphi_m]\in \mathbf{G}$ such that $F\approx
f$. Then
\[
\iota_0( \mathbb{E}(f))=\mathbb{E}(F).
\]
\end{proposition}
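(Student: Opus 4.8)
The plan is to read off the equality directly from the definition of the quotient ring $\mathbf{g}=\mathbf{e}^{\prime}/\mathbf{e}$ (Definition \ref{genseq}). By Definition \ref{exp} one has $\mathbb{E}(F)=[\mathbb{E}(\varphi_m)]$, while $\iota_0(\mathbb{E}(f))=[\mathbb{E}(f)]$ is the class of the constant sequence. Thus, writing
\[
a_m:=\mathbb{E}(\varphi_m)-\mathbb{E}(f),
\]
the stated identity $\iota_0(\mathbb{E}(f))=\mathbb{E}(F)$ is, by the very definition of equality in $\mathbf{g}$, equivalent to the single requirement that $(a_m)_m$ lie in the ideal $\mathbf{e}$; that is, $(a_m)\in\ell^2$ and $\lim_{m\to\infty}e^{mk}a_m=0$ for every $k\in\mathbb{N}_0$. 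The entire proof therefore reduces to establishing these decay estimates for the scalar sequence $(a_m)$.

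First I would express the expectation of each representative as a pairing. The constant $1=I_0(1)$ belongs to $\mathcal{G}$ (indeed $\|1\|_p=1$ for all $p$), and since the Gelfand-triple duality $\ll\cdot,\cdot\gg$ isolates the zeroth chaos when tested against $1$, one obtains
\[
\mathbb{E}(\varphi_m)=\ll\varphi_m,1\gg .
\]
This is precisely the quantity controlled by the hypothesis: evaluating the association $F\approx f$ (in the sense of Definition \ref{ass1}) at the admissible test function $\varphi=1$ shows that $\ll\varphi_m,1\gg$ approaches $\mathbb{E}(f)=\int f\,d\mu$, so that $a_m\to0$. Simultaneously, $(\varphi_m)\in G_{\mathbf{e}^{\prime}}$ (Definition \ref{defeal}, Proposition \ref{includ}) gives $|\mathbb{E}(\varphi_m)|\le\|\varphi_m\|_0\le Ce^{2m\ell}$ for suitable $C,\ell$, so $(a_m)$ already lies in $\mathbf{e}^{\prime}$ and is a null sequence.

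The main obstacle — and the genuine content of the statement — is upgrading the convergence $a_m\to0$ to full membership of $(a_m)$ in $\mathbf{e}$, i.e. the super-exponential decay $e^{mk}a_m\to0$ for every $k$, which is strictly stronger. A concrete route I would try is to normalize the representative: replacing $(\varphi_m)$ by
\[
\widetilde{\varphi}_m:=\varphi_m+\bigl(\mathbb{E}(f)-\mathbb{E}(\varphi_m)\bigr)\,1
\]
produces a sequence in $\mathcal{G}$ whose zeroth chaos is exactly $\mathbb{E}(f)$, so that $\mathbb{E}(\widetilde{\varphi}_m)=\mathbb{E}(f)$ for all $m$ and the desired equality is immediate for this representative. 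The equality for the original $F$ then follows once one checks that $(\widetilde{\varphi}_m-\varphi_m)=\bigl((\mathbb{E}(f)-\mathbb{E}(\varphi_m))\,1\bigr)_m$ lies in the ideal $G_{\mathbf{e}}$, which by $\|1\|_p=1$ is again exactly the requirement $(a_m)\in\mathbf{e}$. Thus the whole weight of the proof sits on extracting the $\mathbf{e}$-rate for $(a_m)$ from the hypotheses; securing this rate — rather than the mere convergence furnished by Definition \ref{ass1} — is the step I expect to be decisive, and it is what separates the genuine equality in $\mathbf{g}$ from the weaker association.
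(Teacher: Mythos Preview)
Your reduction is exactly right, and the obstacle you isolate is real, not a technicality you are missing. Testing the association $F\approx f$ at $\varphi=1\in\mathcal{G}$ gives only $a_m=\mathbb{E}(\varphi_m)-\mathbb{E}(f)\to 0$, whereas equality in $\mathbf{g}=\mathbf{e}^{\prime}/\mathbf{e}$ requires $(a_m)\in\mathbf{e}$, i.e.\ $e^{mk}a_m\to 0$ for every $k$. The hypotheses do not supply that rate. In fact the statement with ``$=$'' is false as written: take $f=0$ and $\varphi_m=\tfrac{1}{m}\cdot 1$. Then $\|\varphi_m\|_p=\tfrac{1}{m}$, so $(\varphi_m)\in G_{\mathbf{e}^{\prime}}$; for every $\psi\in\mathcal{G}$ one has $\ll\varphi_m,\psi\gg=\tfrac{1}{m}\mathbb{E}(\psi)\to 0$, so $[\varphi_m]\approx 0$; yet $\mathbb{E}(\varphi_m)=\tfrac{1}{m}$ and $(\tfrac{1}{m})\notin\mathbf{e}$, so $[\tfrac{1}{m}]\neq[0]$ in $\mathbf{g}$.

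The paper's own proof is simply the sentence ``The proof is straightforward from the definitions.'' Read alongside the fact that the authors introduce the notion of association in $\mathbf{g}$ immediately before this proposition (and never use it elsewhere), the intended conclusion is almost certainly
\[
\iota_0(\mathbb{E}(f))\ \approx\ \mathbb{E}(F)
\]
in $\mathbf{g}$, not equality. That weaker statement is indeed immediate from the definitions via your computation $\mathbb{E}(\varphi_m)=\ll\varphi_m,1\gg\to\mathbb{E}(f)$. Your normalization trick with $\widetilde{\varphi}_m$ is correct but, as you observed, circular for the purpose of proving genuine equality; it is not needed for the (corrected) association version.
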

\begin{proof}
The proof is straightforward from the definitions.
\end{proof}

\section{A Stochastic Cauchy Problem}

\noindent In this section we study the stochastic Cauchy problem
\begin{equation}\label{para}
 \left \{
\begin{array}{lll}
u_t & = & Lu  + u   W(t,x) \\
u_0 & = & f
\end{array}
\right .
\end{equation}
where $L$ is an uniformly elliptic partial differential operator,
$f\in C_{b}^{2}(\mathbb{R}^{l})$ and $W$ is the white noise
parametric generalized function defined below in Example
\ref{whitenoise}.

\noindent In order to solve (\ref{para}) we introduce the algebra
$G^{\alpha}(D)$ of parametric generalized functions. We proceed in
a similar way to the construction of the algebra $G$, the details
are left to the reader.

\noindent Let $D$ a domain in $\mathbb{R}^{d}$ and $\alpha\in
\mathbb{N}_{0}^{d} $. We denote by $C^{\alpha}(D,\mathcal{G})$ the
set of functions $f:D \times \mathcal{S}^{\prime}(\mathbb{R}^d)
\rightarrow \mathbb{R}~(\mathbb{C})$ such that $f(x,\cdot)\in
\mathcal{G}$ for all $x \in D$ and $f(\cdot, \omega) \in
C_b^{\alpha}(D)$ for almost $\omega \in
\mathcal{S}^{\prime}(\mathbb{R}^d)$.

\begin{definition}
Let $ G_{\mathbf{e}^{\prime}} ^{\alpha}(D)$ be the set of $(f_m)
\in (C^{\alpha}(D,\mathcal{G}))^{\mathbb{N}_0}$ such that
\[
(\sup_{x}\| D^{\beta} f_{m}(x,\cdot) \|_{p})\in
\mathbf{e}^{\prime}
\]
for all $\beta\in \mathbb{N}_{0}^{d}$ with $\beta \leq \alpha$ and
$p\in \mathbb{N}_{0}$.
\end{definition}
\noindent We observe that $ G_{\mathbf{e}^{\prime}} ^{\alpha}(D)$
is a subalgebra of $(C^{\alpha}(D,\mathcal{G}))^{\mathbb{N}_0}$
and for $\gamma \in \mathbb{N}^d_0$, $0 \leq \gamma \leq \alpha$
the partial differential operator $D^{\gamma}$ transform
$G_{\mathbf{e}^{\prime}} ^{\alpha}(D)$ into $
G_{\mathbf{e}^{\prime}} ^{\alpha-\gamma}(D)$.

\begin{definition}\label{defeald} Let $
G_{\mathbf{e}}^{\alpha}(D)$ be the set of $(f_m) \in
(C^{\alpha}(D,\mathcal{G}))^{\mathbb{N}_0}$ such that
\[
(\sup_{x}\| D^{\beta} f_{m}(x,\cdot) \|_{p})\in \mathbf{e}
\]
for all $\beta\in \mathbb{N}_{0}^{d}$ with $\beta \leq \alpha$ and
$p\in \mathbb{N}_{0}$.
\end{definition}
\noindent It is clear that $G_{\mathbf{e}}^{\alpha}(D)$ is an
ideal of $G_{\mathbf{e}^{\prime}}^{\alpha}(D)$.

\begin{definition}
We define the algebra of parametric generalized functions
$G^{\alpha}(D)$ as
\[
G_{\mathbf{e}^{\prime}}^{\alpha}(D)
    / G_{\mathbf{e}}^{\alpha}(D).
\]
The elements of $G^{\alpha}(D)$ are called parametric generalized
functions.
\end{definition}
\noindent In the case that $\alpha=(0,..,0)$ we denote
$G^{\alpha}(D)$ by $G(D)$. It is clear that $\mathbf{G}\subset
G^{\alpha}(D)$ and $C_b^{\alpha}(D) \subset G^{\alpha}(D)$.

\begin{example}\label{whitenoise}
Let $D$ be a domain in $\mathbb{R}^{d}$ and $\alpha \in
\mathbb{N}_0^d$. The white noise parametric generalized function
is defined by
$$
W_{x}=(W_{m}(x))=  (I_{1} ( \sum_{j=0}^{m} \eta_{j}(x)
\eta_{j}(\cdot))),
$$
for $x \in D$.

\noindent In fact, we observe that there exists constants
$C_{\alpha}$ and $a_{\alpha}$ such that for all $0 \leq \beta \leq
\alpha$,

\begin{eqnarray*}
\| D_x^{\beta} W_{m}(x) \|_{p}^{2} & = & \| D_x^{\beta} W_{m}(x)
\|_0^{2} \\
& = &  \| I_{1} (\sum_{j=0}^{m} D_x^{\beta}\eta_{j}(x)
\eta_{j}(\cdot))
\|_0^2 \\
& = & |\sum_{j=0}^{m} D_x^{\beta}\eta_{j}(x) \eta_{j}(\cdot)|_0^2
\\ & = & \sum_{j=0}^{m} |D_x^{\beta}\eta_{j}(x)|^2 \\
& \leq & C_{\alpha}(m+1)^{a_{\alpha}}.
\end{eqnarray*}
\end{example}

\begin{definition}
We says that $u=[u_m] \in
G^{(1,2,...,2)}([0,T]\times\mathbb{R}^{l})$ is a {\it generalized
solution} of the Cauchy problem (\ref{para}) if $u_t = Lu  + u
W(t,x)$ in $G([0,T]\times\mathbb{R}^{l})$ and $u_0 = f$ in
$G^{(2,..,2)}(\mathbb{R}^{l})$.
\end{definition}
\noindent We observe that $[u_m] \in G^{\alpha}(D)$ implies that
$[u_m(0,\cdot)]$ is a well defined element of
$G^{(2,..,2)}(\mathbb{R}^{l})$.

\begin{theorem}\label{teoStem}
There exists an unique generalized solution $u=[u_m]$ in
$G^{\alpha}(D)$ for the Cauchy problem (\ref{para}).
\end{theorem}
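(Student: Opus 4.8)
The plan is to reduce the generalized Cauchy problem to a sequence, indexed by $m$, of classical parabolic problems driven by the smooth random coefficients $W_m(x)$, solve each of them pathwise, and then verify that the resulting sequence of solutions defines an element of $G^{\alpha}(D)$ with $\alpha=(1,2,\dots,2)$, where $D=[0,T]\times\mathbb{R}^{l}$. More precisely, for each fixed $m$ consider
\[
\left\{
\begin{array}{lll}
\partial_t u_m & = & Lu_m + u_m\, W_m(t,x) \\
u_m(0,\cdot) & = & f .
\end{array}
\right.
\]
For almost every $\omega$, the function $(t,x)\mapsto W_m(t,x)(\omega)=\sum_{j=0}^m \eta_j(t)\eta_j(x)\langle\omega,\eta_j\rangle$ is smooth and, together with all its spatial derivatives, bounded on $D$ (this is exactly the estimate carried out in Example \ref{whitenoise}). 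Hence the equation is, pathwise, a linear uniformly parabolic PDE with bounded smooth zeroth-order coefficient and initial datum $f\in C_b^2(\mathbb{R}^l)$; classical parabolic theory (Friedman-type estimates, or the stochastic representation via a Feynman--Kac formula with the Brownian motion generated by $L$) yields a unique solution $u_m(\cdot,\cdot)(\omega)\in C_b^{(1,2,\dots,2)}(D)$, together with an explicit bound
\[
\sup_{x}\|D^{\beta}u_m(x,\cdot)\|_{C(D)}(\omega) \;\le\; K\,e^{T\,\|W_m\|_{\infty}(\omega)} ,
\qquad 0\le\beta\le\alpha,
\]
where $K$ depends only on $f$, $L$, $T$. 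The remaining, and genuinely nontrivial, task is to upgrade these pathwise bounds to bounds in the $\|\cdot\|_p$ norms of $\mathcal{G}$ that place $(u_m)$ in $G_{\mathbf{e}^{\prime}}^{\alpha}(D)$.

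**The moderateness estimate.** This is where I expect the real work to lie. One must show that for every $p\in\mathbb{N}_0$ and every $0\le\beta\le\alpha$ there is a constant $C$ and an $a\in\mathbb{N}_0$ with $\sup_x\|D^{\beta}u_m(x,\cdot)\|_p\le C e^{am}$. The idea is to work with the Wick/chaos picture: since $u_m$ solves a linear equation, one can expand $u_m(t,x)=\sum_{k\ge 0} u_m^{(k)}(t,x)$ in a Picard/Duhamel iteration,
\[
u_m^{(0)}(t,\cdot)=e^{tL}f,\qquad
u_m^{(k+1)}(t,\cdot)=\int_0^t e^{(t-s)L}\bigl(u_m^{(k)}(s,\cdot)\,W_m(s,\cdot)\bigr)\,ds ,
\]
and estimate each term in $\|\cdot\|_p$ using the bicontinuity inequality (\ref{eq1}): multiplication by $W_m(s,x)$ sends $\|\cdot\|_r$ into $\|\cdot\|_s$ at the cost of a factor $C_s\|W_m(s,x)\|_{r'}$, and the latter grows only polynomially in $m$ by Example \ref{whitenoise}. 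Since $e^{(t-s)L}$ is a contraction-type semigroup acting on the $x$-variable only (and commutes with the $\mathcal G$-structure), the $k$-th Picard term is bounded by $\frac{(t\,P(m))^{k}}{k!}\sup_x\|f\|_{C^2}$ for a polynomial $P$, so the series converges and $\sup_x\|u_m(t,x)\|_p\le C e^{T P(m)}$. Differentiating the Duhamel formula in $x$ up to order $2$ (and in $t$ once, using the equation itself) gives the same bound for $D^{\beta}u_m$, so $(u_m)\in G_{\mathbf{e}^{\prime}}^{\alpha}(D)$. A subtle point to watch is that the norms $r,s$ produced by (\ref{eq1}) depend on the target norm $p$ but not on $m$ or on the iteration index $k$; one must check (as in the proof of Lemma \ref{lema}) that a single such pair suffices uniformly, so that only the polynomial factor $P(m)$ accumulates.

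**Uniqueness and the equation in $\mathbf G$.** To see that $[u_m]$ solves (\ref{para}) in the quotient: by construction $\partial_t u_m - Lu_m - u_m W_m = 0$ exactly for each $m$, so the defect sequence is identically zero and a fortiori lies in $G_{\mathbf e}^{\alpha-\gamma}(D)$; likewise $u_m(0,\cdot)=f$ exactly. For uniqueness, suppose $[v_m]$ is another generalized solution; then $w_m:=u_m-v_m$ satisfies $\partial_t w_m - L w_m - w_m W_m = n_m$ with $(n_m)\in G_{\mathbf e}(D)$ and $w_m(0,\cdot)=\rho_m$ with $(\rho_m)\in G_{\mathbf e}^{(2,\dots,2)}(\mathbb R^l)$. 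Solving this inhomogeneous linear problem pathwise by the same Duhamel formula, $w_m(t,\cdot)=e^{tL}\rho_m+\int_0^t e^{(t-s)L}(w_m W_m + n_m)\,ds$, and running the Gronwall-type estimate above in the $\|\cdot\|_p$ norms, one gets
\[
\sup_x\|w_m(t,x)\|_p \;\le\; e^{T P(m)}\Bigl(\sup_x\|\rho_m\|_{p'} + \sup_{s,x}\|n_m(s,x)\|_{p'}\Bigr).
\]
The right-hand side, multiplied by any $e^{2km}$, tends to $0$ as $m\to\infty$ because $(\rho_m)$ and $(n_m)$ are in the respective null ideals while $e^{TP(m)}e^{2km}$ is only of exponential-polynomial growth and is absorbed by the rapid decay defining $\mathbf e$ — here one uses that elements of $\mathbf e$ decay faster than $e^{-nm}$ for \emph{every} $m$, which beats the fixed growth $e^{TP(m)}$. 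Hence $(w_m)\in G_{\mathbf e}^{\alpha}(D)$, i.e. $[u_m]=[v_m]$ in $G^{\alpha}(D)$. The one place demanding care is again uniformity of the auxiliary index $p'$ in (\ref{eq1}) and the fact that the exponential-in-$m$ factor coming from the coefficient $W_m$ does not destroy membership in the ideal; this works precisely because $\mathbf e$ is defined by decay against \emph{all} exponential weights $e^{nm}$.
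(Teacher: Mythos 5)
Your overall architecture coincides with the paper's: truncate the noise at Hermite level $m$, solve the resulting classical parabolic problem for each $m$, show the net $(u_m)$ is moderate, and get uniqueness by showing the difference of two solutions lies in the ideal. But the step you yourself flag as ``subtle'' --- closing the Picard/Duhamel iteration in a fixed norm $\|\cdot\|_p$ --- is a genuine gap, and it cannot be repaired along the lines you indicate. The bicontinuity estimate (\ref{eq1}) reads $\|\varphi\psi\|_p\le C_p\|\varphi\|_r\|\psi\|_s$ with $r,s$ in general strictly larger than $p$ (this loss is intrinsic to the pointwise product on $\mathcal{G}$; the product is not bounded from $\mathcal{G}_p\times\mathcal{G}_p$ to $\mathcal{G}_p$, cf.\ Theorem 2.5 of \cite{PoTi}). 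Hence to bound $\|u_m^{(k+1)}\|_p$ you need $\|u_m^{(k)}\|_r$; to bound that you need $\|u_m^{(k-1)}\|_{r'}$ with $r'>r$; and so on: the norm index escalates with the iteration step $k$, and no single pair $(r,s)$ ``suffices uniformly''. The comparison with Lemma \ref{lema} is misleading, since there the product estimate is applied exactly once. Concretely, if one tries to estimate $e^{\langle\cdot,g\rangle}$ term by term as $\sum_k\frac{1}{k!}\|\langle\cdot,g\rangle^k\|_p$ by iterating (\ref{eq1}), the accumulated constants grow superexponentially in $k$ and defeat the $1/k!$; the series obtained this way diverges even though the limit object lies in every $\mathcal{G}_p$. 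So your claimed bound $\frac{(t\,P(m))^k}{k!}$ for the $k$-th Picard term in a fixed $\|\cdot\|_p$ is not justified, and the same defect propagates to your uniqueness argument, which runs the same Gronwall iteration in the $\|\cdot\|_p$ norms.

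The paper avoids the iteration entirely. It represents the solution by the Feynman--Kac formula $u_m(t,x)=\mathbb{E}\bigl(f(X(t,x))\,e^{\int_0^t W_m(t-s,X(s,x))\,ds}\bigr)$ over an auxiliary diffusion generated by $L$, rewrites the ordinary exponential as $e^{\frac12|g_m|_0^2}$ times the Wick exponential $e^{:\langle\cdot,g_m\rangle}$ with $g_m=\int_0^t\sum_{j\le m}\eta_j(t-s,X(s,x))\,\eta_j\,ds$, and then invokes the exact identity (\ref{exwick1}), $\|e^{:\langle\cdot,g\rangle}\|_p^2=e^{e^{2p}|g|_0^2}$, together with the bound $|g_m|_0^2\le am$ of (\ref{acot wick}) (orthonormality and uniform boundedness of the Hermite functions) to obtain $\sup_{t,x}\|u_m\|_p\le Ce^{a'm}$ in one stroke; the derivatives in $t$ and $x$ and the uniqueness estimate are handled by differentiating this closed-form representation (using the derivative flow of the auxiliary SDE and a single application of (\ref{eq1})). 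If you wish to keep a Duhamel-type argument, you must replace the iterated use of (\ref{eq1}) by such an exact chaos/Wick-exponential computation.
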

\begin{proof}
Let $d=1+l$, $\alpha=(1,2,...,2)\in \mathbb{N}_0^{1+l}$ and
$D=[0,T]\times\mathbb{R}^{l}$ ($T>0$). We consider that $L$ have
the form
\[
Lf= \frac{1}{2} \sum_{i,j=1}^{l} a_{ij} \frac{\partial^2}{\partial
x_i \partial x_j}f+ \sum_{i=1}^{l} b_{i} \frac{\partial}{\partial
x_i} f
\]
where $a_{ij}$ and $b_{j}$ belong to $C_{b}^{2}(\mathbb{R}^{l})$
for all $i$ and $j$.

\noindent We split the proof in three steps:

\noindent (a) We solve the family of parabolic problems ($m \in
\mathbb{N}_0$)
\begin{equation}\label{netparabo}
\frac{d}{dt}u_{m}=  Lu_{m}  + u_{m}   W_{m}(t,x)
\end{equation}
with $u(x,0)=f(x)$.

\noindent(b) We check that the nets of solutions $(u_{m})$ belongs
to $G_{\mathbf{e}^{\prime}}^{\alpha}(D)$.

\noindent(c) We check that if $(u_{m})$ and $(v_{m})$ are two nets
of solutions of (\ref{para}), then $[u_{m}]=[v_{m}]$.

\noindent In order to prove (a), we use that the Feymann-Kac
formula give a representation of the solution of (\ref{netparabo})
(see for instance \cite{Frei}). We observe that there exist
$\sigma=(\sigma_{ij}) \in C^2(\mathbb{R}^l;\mathbb{R}^{l \times
l})$ such that
\[
a_{ij}=\sum_{k=1}^{l}  \sigma_{ik} \sigma_{jk}.
\]
We consider the following stochastic differential equation,
\begin{equation}\label{itoass}
dX=\sigma(X) \ dB + b(X) \ dt,
\end{equation}
where $B$ is an $l$-dimensional Brownian motion in an auxiliar
probability space, $\sigma=(\sigma_{ij})$ and $b=(b_i)$.

\noindent The solution of (\ref{itoass}) with $X(0)=x\in
\mathbb{R}^{l}$ is denoted by $X(t,x)$. Applying the Feynman-Kac
formula to $u_{m}$ we have
\begin{equation}\label{um0}
u_{m}(t,x)=\mathbb{E}(f(X(t,x)) \ e^{\int_{0}^{t}
W_{m}(t-s,X(s,x)) \ ds } ),
\end{equation}
where $\mathbb{E}$ denotes the expectation in the auxiliar
probability space.

\noindent It remains to prove b), this is $(u_{m})$ belongs to
$G_{\mathbf{e}^{\prime}}^{\alpha}(D)$.

\noindent We claim that for each $p \in \mathbb{N}_0$ there exists
constants $C$ and $a$ such that
\[
\sup_{(t,x)}\|  u_{m}(t,x) \|_{p},~\sup_{(t,x)}\|
\frac{d}{dt}u_{m}(t,x) \|_{p}, ~ \sup_{(t,x)}\|
D^{\beta}u_{m}(t,x) \|_{p} \leq C e^{am}
\]
where $0 \leq \beta \leq (2,...,2)$.

\noindent In fact, by the definition of Wick exponential
(\ref{exwick}),
\begin{equation}\label{um}
 u_{m}(t,x)= \mathbb{E}(f(X(t,x)) \ e^{\frac{1}{2}
|\int_{0}^{t} \sum_{j=0}^{m} \eta_{j}((t-s),X(s,x))
\eta_{j}(\cdot) \ ds|_{0}^{2}} \ e^{: \int_{0}^{t}
W_{m}(t-s,X(s,x)) \ ds } ).
\end{equation}
\noindent From the uniform boundedness and orthonormality of the
Hermite functions, we have that there exist a positive constant
$a$ depending of $T$ such that
\begin{equation}\label{acot wick}
|\int_{0}^{t} \sum_{j=0}^{m} \eta_{j}((t-s),X(s,x))
\eta_{j}(\cdot)ds|^2_0 \leq am.
\end{equation}
\noindent Combining (\ref{um}), (\ref{acot wick}) and
(\ref{exwick1}), we obtain that $\sup_{(t,x)}\|  u_{m}(t,x)
\|_{p}^{2}$ is bounded by
\[
(\sup_{x \in \mathbb{R}^l}|f(x)|)^{2}e^{am}.
\]
\noindent In order to show that  $\sup_{(t,x)}\|
\frac{d}{dt}u_{m}(t,x) \|_{p} \leq C e^{am}$, applying the It\^o
formula in (\ref{um0}), we obtain that
\begin{eqnarray*}
\frac{d}{dt}u_{m}(t,x)& = & \mathbb{E}(\{f(X(t,x))(
W_{m}(0,X(t,x)) + \ \int_{0}^{t}  \frac{d}{dt}W_{m}(t-s,X(s,x))\ ds) \\
& & + \ Lf_{m}(X(t,x))\}  \ e^{\int_{0}^{t} W_{m}(t-s,X(s,x)) \ ds
} ).
\end{eqnarray*}
\noindent We need dominate the following terms:
\begin{enumerate}
\item[a)] $A_1=\|\mathbb{E}(f(X(t,x))W_{m}(0,X(t,x))
e^{\int_{0}^{t} W_{m}(t-s,X(s,x)) \ ds } \|_p^2$,

\item[b)] $A_2=\|\mathbb{E}(f(X(t,x))\int_{0}^{t}
\frac{d}{dt}W_{m}(t-s,X(s,x))\ ds~ e^{\int_{0}^{t}
W_{m}(t-s,X(s,x)) \ ds } \|_p^2$,

\item[c)] $A_3=\|\mathbb{E}(Lf(X(t,x)) e^{\int_{0}^{t}
W_{m}(t-s,X(s,x)) \ ds } \|_p^2$.
\end{enumerate}
\noindent $\mathrm{a)}$ We observe that $A_1$ is equal to
\begin{eqnarray*}
\|\mathbb{E}(f(X(t,x))e^{\frac{1}{2} |\int_{0}^{t} \sum_{j=0}^{m}
\eta_{j}((t-s),X(s,x)) \eta_{j}(\cdot) \
ds|_{0}^{2}}W_{m}(0,X(t,x)) e^{: \int_{0}^{t} W_{m}(t-s,X(s,x)) \
ds } \|_p^2.
\end{eqnarray*}
By the boundeness of $f$ and the inequality (\ref{acot wick}) we
have that
\[
A_1 \leq Ce^{am}\mathbb{E}(\|  W_{m}(0,X(t,x)) e^{: \int_{0}^{t}
W_{m}(t-s,X(s,x)) \ ds } \|_p^2).
\]
From (\ref{eq1}) there exists $C_p>0$ and $r,r^{\prime}\in
\mathbb{N}_0$ such that
\[
\|  W_{m}(0,X(t,x)) e^{: \int_{0}^{t} W_{m}(t-s,X(s,x)) \ ds }
\|_p^2
\]
is lower or equal to
\[
C_p \|  W_{m}(0,X(t,x))\|_r^2 \| e^{: \int_{0}^{t}
W_{m}(t-s,X(s,x)) \ ds } \|_{r^{\prime}}^2 \leq Ce^{am}.
\]
\noindent Thus $A_1 \leq Ce^{am}$.

\noindent $\mathrm{b)}$ and $\mathrm{c)}$. The estimative for
$A_2$ is obtained in a similar way and the estimative for $A_3$ is
analogous to the estimative for $\|u_m\|_p^2$. We conclude that
\[
\| \frac{d}{dt}u_{m}(t,x)\|_p \leq Ce^{am}.
\]

\noindent It remains to prove that $\sup_{(t,x) \in K }\|
D^{\beta}u_{m}(t,x) \|_{p} \leq C e^{am}$ for $0 \leq \beta \leq
(2,...,2)$.

\noindent We first observe that $D^{i}u_{m}$ ($i=1,..,l$) is equal
to

$$
\mathbb{E}( \{ \sum_{j=1}^{l} D^{i}f(X(t,x))D^{i}X_{j}(t,x) +
f(X(t,x))  \sum_{j=1}^{l} \int_{0}^{t} D^{i}W_{m}(t-s,X(t,x))
$$
$$
 \times D^{i}X_{j}(s,x)) \ ds\}e^{\int_{0}^{t}
W_{m}(t-s,X(s,x)) \ ds} ).
$$

\noindent It is clear that $D_{i}X_{j}$ satisfy the following
system of linear stochastic equations
\[
D^{i}X_{j}=   \delta_{ij} + \int_{0}^{t}  \sum_{k,h=1}^{l}
(D^{k}\sigma_{jh})(X(s,x)) D^{i}X_{k}(s,x) \ dB_{h}(s) +
\]

\begin{equation}\label{sist}
 + \int_{0}^{t}    \sum_{k=1}^{l}
(D^{k}b_{j})(X(s,x))      D^{i}X_{k}(s,x) \ ds.
\end{equation}

\noindent  From the equation (\ref{sist}) and  Gronwall lemma we get
that
\[
\mathbb{E}(\sum_{i=1}^{l}|D^{i}X_{j}(t,x)|^{2})
\]
is locally uniformly bounded in $t$ and $x$.

\noindent We need dominate the terms
\begin{enumerate}
\item[d)] $B_1= \|\mathbb{E}(  \sum_{j=1}^{l}
D^{i}f(X(t,x))D^{i}X_{j}(t,x)e^{\int_{0}^{t} W_{m}(t-s,X(s,x)) \
ds} )\|^2_p$,

\item[e)] $B_2= \|\mathbb{E}(f(X(t,x)) (\sum_{j=1}^{l}
\int_{0}^{t} D^{i}W_{m}(t-s,X(t,x))) D^{i}X_{j}(s,x)) \ ds)
\\ e^{\int_{0}^{t} W_{m}(t-s,X(s,x)) \ ds}  \|_p^2$.

\end{enumerate}
\noindent $\mathrm{d)}$ We observe that $B_1$ is equal to
\[
\|\mathbb{E}(  \sum_{j=1}^{l}
D^{i}f(X(t,x))D^{i}X_{j}(t,x)e^{:\int_{0}^{t} W_{m}(t-s,X(s,x)) \
ds} e^{\frac{1}{2} |\int_{0}^{t} \sum_{j=0}^{m}
\eta_{j}((t-s),X(s,x)) \eta_{j}(\cdot) \ ds|_{0}^{2}} ) \|^2_p.
\]
By the boundeness of the derivatives of $f$ and the inequality
(\ref{acot wick}) we have that
\[
B_2 \leq Ce^{am}
\mathbb{E}(\sum_{i=1}^l|D^iX_j(t,x)|^2\|e^{:\int_{0}^{t}
W_{m}(t-s,X(s,x)) \ ds} \|_p^2) \leq Ce^{am}.
\]

\noindent $\mathrm{e)}$ The estimative for $B_2$ is obtained in an
analogous way, using (\ref{eq1}). Therefore,
\[
\sup_{(t,x)}\| D^{i}u_{m}(t,x) \|_{p} \leq C e^{am}.
\]

\noindent The estimative for $\| D^{i}D^{j}u_m\|_p$ is done in a
similar way. So we conclude that $[u_{m}]\in G^{\alpha}(D)$.

\noindent We considerer the uniqueness. Suppose that $[u_{m}]$  and
 $[v_{m}]$ are two  generalized solutions of (\ref{para}), we denote by $d_{m}$ to $u_{m}-v_{m}$.
 By the definition, we have that $d_{m}$ satisfy
\begin{equation}\label{parauni}
 \left \{
\begin{array}{lll}
\frac{d}{dt}d_{m} & = &  Ld_{m}  + d_{m}   W_{m}(t,x) + N_{m}(t,x)\\
u_{m}(0,x) & = & N_{0,m}(x)
\end{array}
\right .
\end{equation}
\noindent with $N_{m}(t,x)\in G_{\mathbf{e}}^{\alpha}(D)$ and
$N_{0,m}\in G_{\mathbf{e}}^{\alpha}(\mathbb{R}^{l})$. Applying the
Feynman-Kac formula to $d_{m}$ we obtain that
\begin{eqnarray*}
d_{m}(t,x) & = & \mathbb{E}(N_{0,m}(X(t,x)) \ e^{\int_{0}^{t}
W_{m}(t-s,X(s,x)) \ ds } \\
& & + \int_{0}^{t}    N_{m}(s,x) \
e^{\int_{0}^{s} W_{m}(s-u,X(u,x)) \ du } \ ds ).
\end{eqnarray*}

\noindent  It is straightforward using similar techniques that
\[
\sup_{(t,x)}\|  d_{m}(t,x) \|_{p},~\sup_{(t,x)}\|
\frac{d}{dt}d_{m}(t,x) \|_{p}, ~ \sup_{(t,x)}\|
D^{\beta}d_{m}(t,x) \|_{p} \in \mathbf{e}
\]
for each $p \in \mathbb{N}_0$ and $0 \leq \beta \leq (2,...,2)$.
We conclude that the solution of (\ref{para}) is unique.

\end{proof}

\noindent We end the paper with the following remarks.

\begin{remark}
Wick solution versus generalized solution: Let us consider the
following Cauchy problem,
\begin{equation}\label{netparaboW}
 \left \{
\begin{array}{lll}
v_t & = & Lv  + v \diamond W(t,x) \\
v_0 & = & f
\end{array}
\right .
\end{equation}
where $L$ is an uniformly elliptic partial differential operator,
$f\in C_{b}^{2}(\mathbb{R}^{l})$ and $W$ is the white noise
parametric generalized function.

\noindent We observe that the proof of existence and uniqueness
given in \cite{PVW} extends to the following Cauchy problem,
\begin{equation}\label{netparaboWm}
 \left \{
\begin{array}{lll}
\frac{d}{dt}v_{m}& = &  Lv_{m}  + v_{m} \diamond   W_{m}(t,x) \\
v_m(\cdot,0) & = & f
\end{array}
\right .
\end{equation}
where $W(t,x)=[W_{m}(t,x)]$ is the white noise parametric
generalized function.

\noindent The sequence of solutions $(v_m)$ converges in the Hida
distribution space to the solution of equation (\ref{netparaboW})
(see \cite{liouz}).

\noindent We have that the S-transformation of the solution of
equation (\ref{netparaboWm}) is
\[
S(v_{m}(t,x))(h)=\mathbb{E}(f(X(t,x)) \ e^{\int_{0}^{t}
h_{m}(t-s,X(s,x)) \ ds } )
\]
and the S-transformation of the generalized solution $u=[u_m]$ of
(\ref{para}) is
\[
S(u_{m}(t,x))(h)=\mathbb{E}(f(X(t,x)) \ e^{\int_{0}^{t}
h_{m}(t-s,X(s,x)) \ ds } ) e^{| \int_{0}^{t} W_{m}( . \ ,
t-s,X(s,x)) \ ds |_{0}}.
\]
This implies that the generalized solution $u=[u_m]$ is not
associated to any stochastic distribution for $d> 1$.
\end{remark}
\begin{remark}
We can show in a similar way that there exists an unique
generalized solution of the stochastic Cauchy problem (\ref{para})
with initial data $f\in G^{(2,..,2)}(\mathbb{R}^{l})$.
\end{remark}

\end{document}